\definecolor{shadecolor}{rgb}{1,0.9,0.7}
\newtheorem{theorem}{Theorem}[section]
\newtheorem{lemma}[theorem]{Lemma}
\newtheorem{lemma-definition}[theorem]{Lemma-Definition}
\newtheorem{proposition}[theorem]{Proposition}
\newtheorem{corollary}[theorem]{Corollary}
\newtheorem{conjecture}[theorem]{Conjecture}
\theoremstyle{definition}
\theoremstyle{remark}
\newtheorem{remark}[theorem]{Remark}
\numberwithin{equation}{section}
\numberwithin{figure}{section}
\newcommand{\ZZ} {\mathbb{Z}}
\newcommand{\PP} {\mathbb{P}}
\renewcommand{\AA} {\mathbb{A}}
\newcommand {\shA}  {\mathcal{A}}
\newcommand {\shC}  {\mathcal{C}}
\newcommand {\shD}  {\mathcal{D}}
\newcommand {\shE}  {\mathcal{E}}
\newcommand {\shL}  {\mathcal{L}}
\newcommand {\shM}  {\mathcal{M}}
\newcommand {\shN}  {\mathcal{N}}
\newcommand {\shO}  {\mathcal{O}}
\newcommand {\shT}  {\mathcal{T}}
\newcommand {\shX}  {\mathcal{X}}
\newcommand {\foC}  {\mathfrak{C}}
\newcommand {\foM}  {\mathfrak{M}}
\newcommand {\Bl}  {\operatorname{Bl}}
\newcommand {\diag} {\operatorname{diag}}
\newcommand {\ev}  {\operatorname{ev}}
\newcommand {\Hom}  {\operatorname{Hom}}
\newcommand {\id}  {\operatorname{id}}
\newcommand {\la}  {\leftarrow}
\newcommand {\lla}  {\longleftarrow}
\newcommand {\lcm}  {\operatorname{lcm}}
\newcommand {\lra}  {\longrightarrow}
\newcommand {\oM}  {\overline{\mathcal{M}}}
\newcommand {\pr}  {\operatorname{pr}}
\newcommand {\pt} {\operatorname{pt}}
\newcommand {\ra}  {\to}
\newcommand {\rk} {\operatorname{rk}}
\newcommand {\Tot}  {\operatorname{Tot}}
\DeclareMathOperator {\GW} {GW}
\DeclareMathOperator {\vdim} {vdim}
\def\mydate{\ifcase\month \or January\or February\or March\or
April\or May\or June\or July\or August\or September\or October\or 
November\or December\fi \space\number\day,\space\number\year}
\newcommand{\htd}{{H_2(D)^+}}
\newcommand{\htz}{{H_2(Z)^+}}
\newcommand{\vir}{{\rm vir}}
\begin{document}


\title
[Local Gromov-Witten Invariants are Log Invariants]
{Local Gromov-Witten Invariants are Log Invariants}
\author{Michel van Garrel, Tom Graber, Helge Ruddat}

\address{\tiny University of Warwick, Mathematics Institute, Coventry CV4 7AL, UK}
\email{michel.van-garrel@warwick.ac.uk}

\address{\tiny Caltech, Department of Mathematics, MC 253-37, 362 Sloan Laboratory, Pasadena, CA 91125, USA}
\email{graber@caltech.edu}

\address{\tiny JGU Mainz, Institut f\"ur Mathematik, Staudingerweg 9, 55128 Mainz, Germany}
\email{ruddat@uni-mainz.de}
\thanks{This work was supported by HR's DFG Emmy-Noether grant RU1629/4-1 and by MvG's affiliation with the Korea Institute for Advanced Study.}

\maketitle
\setcounter{tocdepth}{1}
\tableofcontents
\bigskip

\section{Introduction}

Let $X$ be a smooth projective variety, $D$ a smooth nef divisor on $X$, and $\beta$ a curve class on $X$ with $d:=\beta\cdot D >0$.  The goal of this note
is to prove a simple equivalence between two virtual counts of rational curves on $X$ that can be associated to this situation.  First there are the {\em local invariants},
the Gromov-Witten invariants which virtually count curves in the total space of $\shO_X(-D)$.  These can be defined as integrals against the virtual fundamental class
of $\oM_{0,0}(\Tot (\shO_X(-D)))$.  The conditions on $D$ and $\beta$ are exactly the ones under which these are well-defined.  Second, we can consider the relative or logarithmic invariants which virtually count rational curves in $X$ which intersect $D$ in a 
single point, necessarily with multiplicity $d$.  These could also be thought of as a virtual count of maps from $\AA^1$ to the open variety $X\backslash D$.  
There exist several moduli spaces that can be used to define these counts.  We will use the space of logarithmic stable maps to the log smooth space $X(\log D)$ associated to the pair $X$ and $D$.  This space parametrizes many types of maps with different specified contact orders, but the one directly relevant to this problem can be denoted $\oM_{0,(d)}(X(\log D),\beta)$, where the $(d)$ is meant to denote a single marked point having maximal contact order with $D$.  Our main result is a comparison of the two virtual fundamental classes here.  

\begin{theorem} \label{mainthm-intro}
$F_*[\oM_{0,(d)}(X(\log D),\beta)]^\vir=(-1)^{d+1}d[\oM_{0,0}(\Tot(\shO_X(-D)),\beta)]^\vir$.
\end{theorem}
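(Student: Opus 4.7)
The plan is to deform $X$ to the normal cone of $D$ and apply a log degeneration formula, reducing the comparison to an explicit multi-cover calculation on a $\PP^1$-bundle over $D$; the factor $(-1)^{d+1}d$ arises from the standard Aspinwall--Morrison formula.

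As a preliminary step, I would identify the local virtual class with a twisted class on the ordinary Kontsevich space. Since $\Tot(\shO_X(-D)) \to X$ has affine fibers, any stable map in a class $\beta \in H_2(X)$ must factor through the zero section, giving
\[
[\oM_{0,0}(\Tot(\shO_X(-D)),\beta)]^\vir = e(V) \cap [\oM_{0,0}(X,\beta)]^\vir,
\]
where $V := R^1\pi_* f^*\shO_X(-D)$ is a vector bundle of rank $d-1$ (using $D$ nef and $\beta\cdot D = d > 0$). The map $F$ factors through the evident forgetful morphism $\oM_{0,(d)}(X(\log D),\beta) \to \oM_{0,0}(X,\beta)$, so the theorem becomes an equality of cycles on $\oM_{0,0}(X,\beta)$.

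Next, I would consider the degeneration $\X := \Bl_{D \times \{0\}}(X \times \AA^1) \to \AA^1$, with generic fiber $X$ and central fiber $X \cup_D Y$ for $Y := \PP_D(\shO_D \oplus N_{D/X})$, with gluing divisor $D_0 \cong D \subset Y$ and infinity section $D_\infty$. Equip the total space with the log structure extending $D\subset X$ generically to $D_\infty$ at $t=0$; deformation invariance together with the log degeneration formula of Abramovich--Chen--Gross--Siebert then decomposes the log invariant as a sum over rigid tropical types. Because $\beta\in H_2(X)$, only the type in which the bulk of the curve sits in the $X$-component (as a log stable map to $(X,D)$) with an appended rational tail realized as a degree-$d$ cover of a single fiber of $Y \to D$ meeting $D_\infty$ with full tangency contributes. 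A parallel degeneration on the local side---after compactifying $\Tot(\shO_X(-D))$ to $\PP(\shO_X(-D) \oplus \shO_X)$ and using its infinity section as log divisor---yields an analogous decomposition in which the $X$-contributions coincide with those on the log side.

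The comparison therefore reduces to a single computation on a fiber $\PP^1 \subset Y$: the log rubber contribution of degree-$d$ covers with maximal tangency at $D_\infty$ evaluates to $1/d$, while the local multi-cover contribution over the same rigid rational curve evaluates to $(-1)^{d+1}/d^2$ by Aspinwall--Morrison; their ratio is $(-1)^{d+1}d$. The main obstacle I anticipate is two-fold: first, since $\Tot(\shO_X(-D))$ is non-compact, one cannot apply the log degeneration formula to it directly, and must either compactify and control the additional contributions from curves touching the infinity section, or lift the argument entirely to the twisted obstruction theory $e(V)$ on $\oM_{0,0}(X,\beta)$; second, matching the combinatorial/tropical data on the two sides---ensuring that exactly one tropical type contributes and that the gluing data along the cut $D_0$ agree---requires a careful dimension and positivity count using the nef hypothesis on $D$.
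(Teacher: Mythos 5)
Your proposal captures the central philosophical ideas (degenerate to the normal cone of $D$, isolate a rational-tail multi-cover contribution, and the numerics $(-1)^{d+1}/d^2$ versus $1/d$) but the implementation diverges from the paper in a way that both complicates matters and leaves the crucial technical step unaddressed.

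The most significant deviation is the double degeneration. You propose to degenerate $(X,D)$ to $(X\cup_D Y, D_\infty)$ to decompose the log invariant, and \emph{separately} degenerate a compactification $\PP(\shO_X(-D)\oplus\shO_X)$ to decompose the local invariant, then argue that the $X$-contributions on both sides coincide and take a ratio. This requires proving an identification of the two $X$-pieces that you only assert. The paper avoids this entirely via a single ``twisted'' degeneration: rather than compactify, it takes the total space $\shL=\Tot(\shO_\shX(-\shD))$ of a line bundle on the degeneration $\shX=\Bl_{D\times\{0\}}(X\times\AA^1)$, where $\shD$ is the \emph{proper transform} of $D\times\AA^1$. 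Because $\shD$ is disjoint from the strict transform $X_0$ and meets $Y$ in $D_\infty$, the central fiber of $\shL\ra\AA^1$ is $L_X\cup L_Y$ with $L_X\cong X\times\AA^1$ (trivial fibers!) and $L_Y\cong\Tot(\shO_Y(-D_\infty))$. Thus the $L_X$-piece of the degeneration formula is tautologically the log theory of $(X,D)$, and no matching argument is needed. This is what the introduction means by ``a twist of the degeneration to the normal cone of the preimage of $D$ in $\Tot(\shO(-D))$.'' Moreover, your first ``anticipated obstacle'' --- that the degeneration formula cannot be applied to the non-compact total space directly --- is not actually an obstacle: the observation you yourself make, that stable maps must factor through the zero section because $\beta\cdot c_1(\shO_X(-D))<0$, is precisely what makes $\oM(\shL/\AA^1,\beta)=\oM(\shX/\AA^1,\beta)$ proper, so the compactification to $\PP(\shO_X(-D)\oplus\shO_X)$ is unnecessary and would only introduce spurious contributions from the infinity section that you would then have to rule out.

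The other, larger gap is that the claim ``only the type with the bulk in $X$ and a single rational tail contributes'' is exactly the content of the paper's Theorem~\ref{prop-vanishing-pushforward}, which is the technical heart of the argument and occupies Sections~3--5. Proving it requires two separate inputs: an excess-intersection argument (Lemma~\ref{lemma-no-mult-edges-X}) exploiting the fact that evaluation maps to $D\times\AA^1$ factor through a single $\AA^1$-factor, which kills graphs with an $X$-vertex having two or more edges; and a comparison theorem between the virtual class on $\oM_{g,n}(Y(\log D_0),\beta)$ and its image in $\oM_{g,n}(D,p_*\beta)$ (Theorem~\ref{thm-P1-bundle-pullback}), which requires a careful construction via an intermediate stack $\shM$ over a Costello-type space of homology-labeled curves. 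That theorem is then combined with a dimension count to kill all graphs with nontrivial curve classes in $Y$ mapping nontrivially to $D$. None of this is sketched in your proposal, and the positivity and dimension ``counts'' you mention as anticipated obstacles are in fact nontrivial statements that need proof. Finally, a small point: the $(-1)^{d+1}/d^2$ is not literally Aspinwall--Morrison (which concerns $\shO(-1)\oplus\shO(-1)$); the paper reads it off from Bryan--Pandharipande's local Gromov--Witten theory of curves with a log condition at one fiber.
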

Here $F$ denotes the map that takes a logarithmic map to $X(\log D)$ and forgets the logarithmic structure as well as the marked point.  By the negativity of $\shO_X(-D)$, the space of maps to $\shO_X(-D)$ is just the same as the space of maps to $X$, so both sides of the formula are being considered as elements of $A_*(\oM_{0,0}(X,\beta))$.   We remark that we show the same formula holds if we add $n$ marked points to the moduli problems on each side (with 0 contact order with $D$ in the case of left hand side). This equality of virtual cycles leads to analogous equalities of certain numerical invariants after capping with constraints and integrating.

Precisely, let $\gamma=(\gamma_1,....,\gamma_n)$ denote a collection of insertions, with each $\gamma_i$ either a primary invariant (i.e. no descendants) or a descendant of a cycle class which does not meet $D$.  
Let $N_\beta(\gamma)$ denote the genus zero local Gromov-Witten invariant for these insertions and $R_\beta(\gamma)$ the genus zero relative Gromov-Witten invariant with the same insertions (and one maximal contact order marking as before), then as a direct corollary of Theorem~\ref{mainthm-intro}, we find:
\begin{corollary} \label{maincor-intro}
$R_\beta(\gamma)=(-1)^{d+1}dN_\beta(\gamma)$.
\end{corollary}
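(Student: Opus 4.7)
The plan is to deduce the corollary from the virtual class comparison of Theorem~\ref{mainthm-intro} by integrating against the insertion class. First I would invoke the $n$-pointed extension noted in the remark following that theorem: on the relative side we add $n$ markings of contact order $0$ with $D$, on the local side $n$ ordinary markings, and the same scalar relation $F_*[\cdot]^\vir=(-1)^{d+1}d\,[\cdot]^\vir$ continues to hold in $A_*(\oM_{0,n}(X,\beta))$, where $F$ now denotes the map that forgets the log structure as well as the single maximal-contact marking while retaining the $n$ free markings.

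Next I would rewrite each invariant as a virtual integral and reduce to a projection-formula computation along $F$. The evaluation morphisms at the free markings commute with $F$, so every primary insertion $\gamma_i=\ev_i^*\alpha_i$ satisfies $(\ev_i^{\log})^*\alpha_i=F^*\ev_i^*\alpha_i$ automatically. The content is in the descendants. For $\gamma_i=\psi_i^{k_i}\ev_i^*\alpha_i$ with $\alpha_i$ represented by a cycle disjoint from $D$, I would use the Knudsen-style $\psi$-class comparison for the marking-forgetting component of $F$,
\[
\psi_i^{\log}=F^*\psi_i+D_i,
\]
where $D_i\subset\oM_{0,(d),n}(X(\log D),\beta)$ is the boundary divisor on which the $i$-th free marking and the maximal-contact marking lie on a rational component that becomes unstable and is therefore contracted after forgetting the contact marking. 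Such a contracted component maps to a single point of $X$; since the maximal-contact marking sits on it with contact order $d>0$ with respect to $D$, that point lies on $D$. Hence $\ev_i^{\log}|_{D_i}$ factors through $D\hookrightarrow X$, and the hypothesis on $\alpha_i$ forces $(\ev_i^{\log})^*\alpha_i|_{D_i}=0$. Iterating the comparison yields $(\psi_i^{\log})^{k_i}(\ev_i^{\log})^*\alpha_i=F^*\bigl(\psi_i^{k_i}\ev_i^*\alpha_i\bigr)$.

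Combining the two cases, the full insertion class on the relative moduli space is the $F$-pullback of its counterpart on the local moduli space. Applying the projection formula to Theorem~\ref{mainthm-intro} and pushing to a point then gives $R_\beta(\gamma)=(-1)^{d+1}d\,N_\beta(\gamma)$ directly.

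The main obstacle I expect is justifying the $\psi$-class comparison in the logarithmic category: since $F$ is not literally a Knudsen contraction but also discards the log structure, one must verify that its marking-forgetting component acts on $\psi$-classes by the classical formula and that the resulting exceptional locus is exactly the contracted-bubble boundary described above, on whose image the contact marking lies on $D$ for the stated reason. Once this geometric input is available the remaining argument is a routine projection-formula calculation.
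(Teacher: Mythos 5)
Your proposal is correct and follows the same route the paper takes: reduce to the $n$-pointed version of Theorem~\ref{mainthm-intro} and apply the projection formula, using the hypothesis on insertions to guarantee they are $F$-pullbacks. The paper's one-sentence proof simply asserts that "the restriction on the insertions corresponds exactly to the condition that the constraints be pulled back," and your argument supplies the content behind that assertion — the Knudsen $\psi$-class comparison $\psi_i^{\log}=F^*\psi_i+D_i$ and the observation that on the correction divisor $D_i$ the $i$-th evaluation factors through $D$ (because the contracted bubble carries the contact marking with order $d>0$), so disjointness of $\alpha_i$ from $D$ kills the boundary terms.

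One small addendum worth keeping in mind for multiple descendant insertions: when expanding $\prod_i(F^*\psi_i+D_i)^{k_i}$ you also get cross terms like $D_1\cdot(F^*\psi_2)^{k_2}(\ev_2^{\log})^*\alpha_2$; these are still supported on $D_1$, so they are killed by $(\ev_1^{\log})^*\alpha_1|_{D_1}=0$, and the mutually exclusive divisors $D_i$, $D_j$ intersect trivially. Your argument as stated already handles this since each surviving monomial with a $D_i$ factor is capped against $(\ev_i^{\log})^*\alpha_i$, but it is good to note explicitly that the disjointness hypothesis is needed for \emph{each} marking carrying a descendant, not just one.
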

This follows from Theorem~\ref{mainthm-intro} and the projection formula, since the restriction on the insertions corresponds exactly to the condition that the constraints on the space of log stable maps be pulled back from classes on $\oM_{0,n}(X,\beta)$.

This formula was first conjectured by Takahashi \cite{Ta01} for $X=\PP^2$ and $D$ a smooth cubic.  It was proven in this case by Gathmann via explicit calculation of both sides \cite{Ga03}.  In an unpublished note, Graber and Hassett gave a simple proof of the formula when $X$ is any smooth del Pezzo surface and $D$ is a smooth anti-canonical divisor.  The proof we will give here follows the main idea of that argument which is to apply the degeneration formula for Gromov-Witten invariants \cite{LR01, Li02, IP04, AF11, Ch14, KLR17} to a twist of the degeneration to the normal cone of the preimage of $D$ in $\Tot(\shO(-D))$.

In the general setting though, we need to take into account the existence of rational curves in $D$.  In order to control these, we need to study the following situation which may be of independent interest, comparing the virtual geometry of genus zero stable log maps to certain projective bundles to the geometry of stable maps to the base.

Let $\shA$ be a nef line bundle on a smooth projective variety $D$, 
$$Y=\mathbb{P}(\shO_D\oplus\shA) \stackrel{p}{\longrightarrow} D$$ 
the $\PP^1$-bundle over $D$ and $\beta$ a curve class on $Y$.
Let $D_0$ be the section of $p$ with normal bundle $\shA^\vee$ and let $\oM_{0,\Gamma}(Y(\log D_0),\beta)$ be the space of genus zero basic stable log maps to $Y$ where $\Gamma$ is a list of $n$ points with prescribed contact orders to $D_0$. We have a pushforward map 
$$w: \oM_{0,\Gamma}(Y(\log D_0),\beta) \to \oM_{0,n}(D,p_*\beta)$$
which forgets the log structures, composes with the projection to $D$ and stabilizes.  We would like to say that the virtual class on the space of log stable maps to $Y(\log D_0)$ is the pullback of the virtual class of the space of stable maps to $D$, but since the map $w$ need not be flat, there is no well-defined pullback in general.  Nevertheless, we have the following result.

\begin{theorem}[Theorem \ref{thm-pullback} below] \label{thm-P1-bundle-pullback} 
The map $w$ factors through an intermediate space $\shM$, i.e. 
$\oM_{0,\Gamma}(Y(\log D_0),\beta)\stackrel{u}{\lra}\shM\stackrel{v}{\lra} \oM_{0,n}(D,p_*\beta)$ where $u$ is smooth and $v$ a pull-back of a map $\nu$ to a smooth stack, see \eqref{main-diagram-Y-D} for details. Then
$$ [\oM_{0,\Gamma}(Y(\log D_0),\beta)]^\vir = u^*\nu^![\oM_{0,n}(D,p_*\beta)]^\vir. $$
\end{theorem}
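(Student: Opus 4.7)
The plan is to exploit the $\PP^1$-bundle structure of $Y=\PP(\shO_D\oplus\shA)\to D$: a logarithmic stable map $f:C\to Y(\log D_0)$ should decompose, up to log and tropical data, into a stable map $g:C\to D$ together with a one-dimensional ``vertical'' datum recording how $f$ lies in the fibers of $p$. Since the fibers are curves and $\shA$ is nef, the vertical datum is controlled by a line bundle on $C$ whose sections have prescribed zeros at the contact markings, and the resulting lifting problem has no higher obstruction in genus $0$. The idea is therefore to encode the combinatorial and logarithmic data in a smooth intermediate stack $\shM$, realize the vertical lifting as a vector bundle stack (so that $u$ is smooth), and match what remains against the obstruction theory of $\oM_{0,n}(D,p_*\beta)$ via the virtual pullback formalism.

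Concretely, I would construct $\shM$ as a fibered product. Let $\shN$ be a smooth Artin stack of logarithmic enhancements of $n$-pointed prestable curves which parametrizes exactly the tropical types arising for contact data $\Gamma$, so $\shN$ is essentially a disjoint union, indexed by tropical types, of smooth log-curve stacks over $\foM_{0,n}$. Let $\nu:\shN\to\foM_{0,n}$ be the forgetful map and set $\shM:=\shN\times_{\foM_{0,n}}\oM_{0,n}(D,p_*\beta)$, with $v:\shM\to\oM_{0,n}(D,p_*\beta)$ the projection; by construction $v$ is the pull-back of $\nu$. Define $u:\oM_{0,\Gamma}(Y(\log D_0),\beta)\to\shM$ by sending $f$ to its underlying stable map $p\circ\underline{f}$ to $D$ together with the tropical and log data of $f$. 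To show $u$ is smooth, I would identify the fiber of $u$ over a fixed $(g:C\to D)$ plus tropical type with the moduli of log liftings of $g$ into $Y(\log D_0)$ matching the prescribed type; for fixed $g$ this space is governed by sections of $g^*\shA^\vee$ twisted by the divisor dictated by the tropical type. Nefness of $\shA$ and genus zero force the relevant $H^1$ to vanish component-by-component, producing a vector bundle stack and hence smoothness.

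With the factorization $w=v\circ u$ in place, the final step is an application of Manolache's virtual pullback. The perfect obstruction theory of $\oM_{0,\Gamma}(Y(\log D_0),\beta)$ relative to the stack of log curves is governed by $f^*T_{Y(\log D_0)}$, and the short exact sequence
$$0\to T_{Y/D}(-\log D_0)\to T_{Y(\log D_0)}\to p^*T_D\to 0$$
splits this complex into a horizontal piece pulled back through $w$ from the obstruction theory of $\oM_{0,n}(D,p_*\beta)$ and a vertical piece that, by the analysis above, is absorbed into the smoothness of $u$. Compatibility of obstruction theories along $u$ and $v$, plus functoriality of virtual pullback ($u^*$ for smooth $u$, Manolache's $\nu^!$ for the map to a smooth target), then yields the identity $[\oM_{0,\Gamma}(Y(\log D_0),\beta)]^\vir=u^*\nu^![\oM_{0,n}(D,p_*\beta)]^\vir$.

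The main obstacle will be performing the stratification by tropical type precisely enough to define $u$ as a single morphism to $\shM$ and to verify its smoothness globally rather than stratum-by-stratum. Contracted genus-zero components of $C$ mapping into $D_0$ force the vertical line bundle to be twisted by combinatorially determined divisors, and one must match this twist with the basic monoids of Gross--Siebert so that the fibers of $u$ form a vector bundle stack in families. Once this tropical bookkeeping is under control, the smoothness and the obstruction-theoretic comparison reduce to standard cohomological computations on trees of rational curves.
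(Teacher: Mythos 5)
Your overall strategy matches the paper's: factor $w$ through an intermediate stack $\shM$ obtained as a fiber product, show $u$ is smooth via vanishing of $H^1$ of the relative log tangent bundle, and chase the short exact sequence of log tangent bundles through Manolache's virtual pullback formalism. The crucial detail you have wrong, however, is the base of the fiber product. You set $\shM := \shN \times_{\foM_{0,n}} \oM_{0,n}(D,p_*\beta)$ with $\shN$ a stack of log/tropical enhancements of prestable curves, both mapping to $\foM_{0,n}$ by remembering the underlying prestable curve. But then the map $u$ is not defined: a point of $\oM_{0,\Gamma}(Y(\log D_0),\beta)$ with source curve $C$ maps to $\shN$ by remembering $C$ (with its log structure), while it maps to $\oM_{0,n}(D,p_*\beta)$ by composing with $p$ and \emph{stabilizing}, which replaces $C$ by a possibly different curve $C'$. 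The two compositions to $\foM_{0,n}$ disagree, so there is no induced map to the fiber product. The tropical data recorded in $\shN$ does not fix this, because whether a component is destabilized under $p_*$ is determined by its curve class, which the tropical type alone does not encode.

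The paper's resolution is to work with Costello's curve-class-labeled prestable curve stacks: $\foM^{\log}_{\htz}$ (log curves labeled by effective classes in $H_2(Z)^+$) and $\foM_{\htd}$ (prestable curves labeled by effective classes in $H_2(D)^+$), both \'etale over the unlabeled stacks. The map $\nu:\foM^{\log}_{\htz}\to\foM_{\htd}$ can then be defined to include the stabilization, because the labels determine combinatorially which components become unstable; $\shM$ is the fiber product over $\foM_{\htd}$ along this $\nu$. Without the labels, the stabilization step is simply not a morphism of the base stacks, and your diagram does not commute.

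There is a second, related gap in the obstruction-theory comparison. You assert that the horizontal piece $p^*\shT_D$ is "pulled back through $w$ from the obstruction theory of $\oM_{0,n}(D,p_*\beta)$," but $w$ is not flat and the source curves of the two moduli problems differ by a contraction. The paper handles this by exhibiting $\shM$ as an open substack of $\Hom^0(\nu^{-1}\foC_{\htd}/\foM^{\log}_{\htz},D)$ (which carries the pulled-back obstruction theory) and proving that this is canonically isomorphic, with matching obstruction theories, to $\Hom^0(\foC^{\log}_{\htz}/\foM^{\log}_{\htz},D)$. The isomorphism relies on the fact that morphisms constant on the contracted rational components factor uniquely through the contraction map (via $c_*\shO_C=\shO_{C'}$ and \cite[Lemma 2.2]{BM96}), and the obstruction theories agree because contracted genus-zero components contribute nothing to the cohomology of $f^*\shT_D$. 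Some version of this comparison is necessary before the tangent-bundle exact sequence can be invoked; your proposal does not supply it. You correctly identify the $H^1$ vanishing from nefness of $\shA$ as the reason $u$ is smooth and the application of Manolache's results as the final step, so once the labeled-curve stacks and the $\Hom^0$ identification are in place, the remainder of your argument goes through.
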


In fact, this theorem is proven in more generality below -- the only feature needed of the map
$p:Y(\log D_0) \to D$ is that it is a log smooth projective morphism whose relative log tangent bundle satisfies a positivity condition.


We believe that our main result generalizes as follows.
\begin{conjecture} \label{conjecture} 
Let $D\subset X$ be a normal crossing divisor with smooth nef components $D_1,...,D_k$ and $\beta$ a curve class such that $d_i:=\beta\cdot D_i>0$ for all $i$. Let
$$F:\oM_{0,(d_1),...,(d_k)}(X(\log D),\beta)\ra \oM_{0,0}(X,\beta)$$ 
be the forgetful map from the moduli space of genus zero basic stable log maps with one marking for each $D_i$ requiring maximal order of contact $d_i$ at $D_i$, then
$$F_*[\oM_{0,(d_1),...,(d_k)}(X(\log D))]^\vir=\left(\prod_{i=1}^k(-1)^{d_i+1}d_i\right)[\oM_{0,0}(\Tot(\shO_X(-D_1)\oplus....\oplus \shO_X(-D_k)))]^\vir.$$
\end{conjecture}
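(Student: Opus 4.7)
The plan is to proceed by induction on the number of components $k$, with Theorem~\ref{mainthm-intro} providing the base case $k=1$. The strategy is to peel off one divisor at a time by a relative version of the degeneration-to-the-normal-cone argument used for the single-divisor case, together with a corresponding relative strengthening of the $\PP^1$-bundle comparison Theorem~\ref{thm-P1-bundle-pullback}.

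\emph{Step 1: Relative version of Theorem~\ref{mainthm-intro}.} The key auxiliary result I would aim to establish is a log-enriched version of the main theorem. Concretely, for $X$ smooth projective, $D_1$ smooth and nef, and an SNC divisor $D'$ meeting $D_1$ transversally, with $\beta\cdot D_1=d_1>0$ and any prescribed contact profile $\mathbf{d}'$ along $D'$, I would prove
\[
F_*[\oM_{0,(d_1),\mathbf{d}'}(X(\log(D_1+D')),\beta)]^\vir \;=\; (-1)^{d_1+1}d_1\cdot G_*[\oM_{0,\mathbf{d}'}(Z_1(\log \pi_1^{-1}D'),\widetilde\beta)]^\vir,
\]
where $Z_1=\Tot(\shO_X(-D_1))$ with projection $\pi_1$ and $\widetilde\beta$ is the lift of $\beta$. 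The proof would mirror the one sketched for Theorem~\ref{mainthm-intro}: twist and degenerate $Z_1$ to the normal cone of $\pi_1^{-1}D_1$, so that only one combinatorial type of log splitting survives, and handle the resulting $\PP^1$-bundle contribution using a generalization of Theorem~\ref{thm-P1-bundle-pullback} in which the base carries the residual log structure coming from $D'\cap D_1$.

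\emph{Step 2: Iteration.} With Step 1 in hand, I would apply it inductively. At stage $i$, having already peeled off $D_1,\dots,D_{i-1}$, we work on
\[
Z_{i-1}:=\Tot\bigl(\shO_X(-D_1)\oplus\cdots\oplus\shO_X(-D_{i-1})\bigr),
\]
equipped with the residual log structure along the preimages of $D_i,\dots,D_k$. Since $Z_{i-1}\to X$ is smooth, these preimages remain smooth, pairwise transverse, and nef in the appropriate sense relative to $X$. Applying Step~1 with $D_1$ replaced by the preimage of $D_i$ produces the factor $(-1)^{d_i+1}d_i$ and passes to $Z_i$. After $k$ such reductions one recovers the right-hand side of the conjecture, and the product of peeling factors assembles to $\prod_i (-1)^{d_i+1}d_i$.

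\emph{Main obstacle.} The whole argument rests on Step 1, and that is where the genuinely new difficulties lie. First, the intermediate ambient spaces $Z_i$ are only projective over $X$ rather than projective, so one must either develop Gromov-Witten/log theory for the relevant log-smooth projective morphisms or compactify the fibers of $\pi_i$ and verify that the boundary contributions vanish (using the negativity of $\shO_X(-D_j)$ as in the passage from $\Tot(\shO_X(-D))$ to $X$ employed below Theorem~\ref{mainthm-intro}). Second, and more seriously, the degeneration-plus-twist argument has to be carried out while carrying an auxiliary log structure $D'$; this forces us to generalize Theorem~\ref{thm-P1-bundle-pullback} so that the base $D_1$ itself is a log scheme, and to verify that the positivity hypothesis on the relative log tangent bundle survives. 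The core combinatorial point to check is that the basic tropical types contributing to the moduli space of log maps to the degeneration of $X(\log(D_1+D'))$ still have a unique vertex carrying the maximal-contact marking at $D_1$, even in the presence of bivalent vertices coming from contacts with $D'$; showing this requires an analysis of basic monoids for log maps to the degeneration with two separately tracked contact directions, together with a vanishing argument eliminating the mixed combinatorial types. Once these technical ingredients are in place, the induction runs cleanly.
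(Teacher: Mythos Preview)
This statement is presented in the paper as a \emph{conjecture}, not a theorem; there is no proof in the paper to compare your attempt against. Immediately after stating it, the authors write that ``once the technology of punctured Gromov-Witten invariants and a more general degeneration formula is fully developed, we expect that the proof we give in this paper could be iterated to imply Conjecture~\ref{conjecture},'' and they note that when the $D_i$ are pairwise disjoint the existing machinery already suffices. Your inductive peel-off-one-divisor strategy is exactly the iteration the authors have in mind, so at the level of overall plan you and the paper agree.

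The gap is in your Step~1, and you have correctly located it but underestimated its depth. The degeneration formula you invoke (Theorem~\ref{degen-formula}, from \cite{KLR17}) is stated for a central fiber $\shX_0=X\sqcup_D Y$ that is log smooth over the standard log point with two smooth components meeting along a smooth divisor. Once you carry an auxiliary log structure along $D'=D_2+\cdots+D_k$, the strict transform of $D'$ in the degeneration meets the gluing divisor $D_1$ (the $D_i$ are only assumed to be normal crossing, not disjoint), so the central fiber is no longer of this simple type and the cited formula does not apply. Worse, the pieces $\oM_V$ on the $Y$-side would now parametrize log maps with tangency along \emph{both} $D_0$ and $p^{-1}(D'\cap D_1)$, and at the nodes lying over $D_1\cap D'$ one is forced into exactly the situation for which punctured invariants were invented. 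Your proposed fix---generalize Theorem~\ref{thm-P1-bundle-pullback} to a log base and do a direct analysis of basic monoids to exclude mixed types---does not get around this: the missing input is not a sharper combinatorial lemma but a degeneration formula valid in the presence of the extra log directions, which at the time of the paper did not exist. In short, your outline is the right one, and it reduces the conjecture to precisely the technical ingredients the authors flag as not yet available.
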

We conjecture also the similar statement where further markings are added on both sides (with zero contact orders to the $D_i$ for the left hand side).
As evidence for this conjecture, note that it allows computing the local invariant $\frac{1}{d^3}$ of $\shO_{\PP^1}(-1,-1)$ for degree $d$ curves from the unique Hurwitz cover of $\PP^1$ with maximal branching at $0$ and $\infty$ and cyclic order $d$ automorphism, so the log invariant is $\frac1{d}$, see also \cite[Remark 4.17]{MR16}.
We also checked Conjecture~\ref{conjecture} for $\PP^n$ with $D$ the toric boundary in the case of a single insertion of $\psi^{n-1}[pt]$: the log invariant can easily be computed from \cite[Theorem 1.1]{MR16}+\cite{MR19} and equals $1$ for all $n$ and $d$.
We are grateful to Andrea Brini for computing for us the local invariant for this situation confirming the conjecture in this case.
We thank Dhruv Ranganathan for pointing out that \cite[Lemma~3.1]{PZ08} computes the virtual count of degree $d$ genus zero curves in $\Tot(\shO_{\PP^2}(-1)^{\oplus 3})$ passing through two points as $(-1)^{d-1}/d$ and using \cite[Theorem 1.1]{MR16} for the computation of the log invariant confirms the conjecture also in this case. Further evidence will appear in \cite{BBvG}.

Once the technology of punctured Gromov-Witten invariants and a more general degeneration formula is fully developed, we expect that the proof we give in this paper could be iterated to imply Conjecture~\ref{conjecture}.  If the $D_i$ are disjoint (as in the first example above) the existing
technology is already enough to establish the conjecture.

We are grateful to Dan Abramovich, Mark Gross, Davesh Maulik, Rahul Pandharipande and James Pascaleff for helpful and inspiring discussions, and to the anonymous referee for valuable suggestions.


\section{Deducing the main result from the degeneration formula}
\label{section-setup}

First we briefly describe the strategy of proof.  We have a smooth projective variety $X$ containing a nef divisor $D$.  If we let $\shX = \Bl_{D\times\{0\}}(X\times \AA^1)\ra \AA^1$ be the degeneration to the normal cone of $D$ in $X$, then we get a family
over $\AA^1$ whose general fiber is $X$ and whose special fiber is a union of a copy of $X$, which we denote by $X_0$, and a $\PP^1$-bundle over $D$, which we will denote by $Y$.  Precisely, if we let 
$\shA=\shN_{D/X}$, then $Y=\mathbb{P}(\shO_D\oplus\shA)\stackrel{p}{\ra} D$.  This $\PP^1$ bundle comes with two obvious sections whose images we denote by $D_0$ and $D_\infty$ which have normal bundles $\shA^\vee$ and $\shA$ respectively.  The intersection of $X_0$ with $Y$ is given by $D$ in $X_0$ and by $D_0$ in $Y$.

 To construct a degeneration of the total space of $\shO_X(-D)$, we can just look at the total space of any line bundle over $\shX$ whose restriction to a general fiber is $\shO(-D)$.  Our choice, which we denote by $\shL$, is $\Tot(\shO(-\shD))$ where $\shD$ is the proper transform of $D \times \AA^1$.  Then, since 
$\shD \cap X_0 = \emptyset$ and $\shD \cap Y = D_\infty$ we find that the projection $\shL \to \AA^1$ gives us a family whose general fiber is $\Tot\shO_X(-D)$ and whose special fiber is a union of two components $L_X$, and $L_Y$ where $L_X \cong X \times \AA^1$ and $L_Y \cong \Tot(\shO_Y(-D_\infty)$.

Applying the degeneration formula to this family will then relate Gromov-Witten theory of $\Tot (\shO_X(-D)$ to the relative or log invariants of the two pairs $(X \times \AA^1, D\times \AA^1)$ and
$(L_Y, D_0 \times \AA^1)$.  While the degeneration formula in general is quite complicated and given by a sum over combinatorial types of curve degenerations, we will see that for this degeneration in genus zero, every term but one in that sum vanishes, and we can find explicitly the contribution from $L_Y$, so we are left with an expression for the genus zero Gromov-Witten theory of $\shO_X(-D)$ in terms of that of $(X\times \AA^1, D\times \AA^1)$ which is just the same as that of $(X,D)$.

In order to carefully write down a proof of that vanishing, we will need to establish our conventions about notations for the degeneration formula.  Because we will eventually need to make use of the theory of logarithmic stable maps, we will state the version in that setting, although it is worth noting that thanks to the comparison theorems of \cite{AMW14}, we could equally well use the better known formalism for degeneration in terms of relative invariants.


\subsection{Degeneration formula} \label{sec-degenformula}
We recall the degeneration formula where for us the version in \cite[Theorem 1.4]{KLR17} is most convenient, further details on this subsection can be found there. 
The input is a space $\shX_0$ that is log smooth over the standard log point with the additional assumption that $\shX_0= X\sqcup_D Y$ is a union of two smooth components that meet along $D$, a smooth divisor in each component.
We are interested in basic stable log maps to $\shX_0$. The domain curve components map into either $X$ or $Y$ (or both) and the degeneration formula uses this fact effectively to decompose the moduli space of stable maps and hence the virtual fundamental class and Gromov-Witten invariants as a sum of contributions from each type of stable map. We next give the details for the situation of genus zero with $n\ge 0$ markings.
We recall \cite[\S2]{KLR17}: let a curve class $\beta$ in $\shX_0$ and an integer $n$ be given, and let $\Omega(\shX_0)$ denote the set of graphs $\Gamma$ with the following decorations and properties.
The vertices of $\Gamma$ are partitioned into two sets indexed by $X$ and $Y$ and no edge has vertices labeled by only $X$ or only $Y$, this property for a graph is typically called \emph{bipartite}. We will henceforth speak of $X$- and $Y$-vertices, referring to the partition membership. The edges of $\Gamma$ are enumerated $e_1,...,e_r$ (where $r$ may vary), each edge $e$ is decorated with a positive integer $w_e$, each vertex $V$ is decorated with a set $n_V$ and a class $\beta_V$ that is an effective curve class in $X$ or $Y$ depending on the bipartition type of $V$.
The set $n_V$ is a subset of $\{1,...,n\}$ to be thought of as the set of marking labels attached to $V$.
Every $\Gamma\in \Omega(\shX_0)$ is subject to the following stability condition: if $\beta_V=0$, then the valency of $V$ is at least $3$. 
Furthermore, $\beta =\sum_V\beta_V$ and $\beta_V\cdot D=\sum_{V\in e} w_e$ and $\{1,...,n\}=\coprod_V n_V$. 
These conditions make $\Omega(\shX_0)$ a finite set.

Given $\Gamma\in\Omega(\shX_0)$ and a vertex $V$ of $\Gamma$, we define $\Gamma_V$ as the ``subgraph'' of $\Gamma$ at the vertex $V$. That is, $\Gamma_V$ has a single vertex $V$ that is decorated with the set $n_V$ and curve class $\beta_V$ and has as adjacent half-edges the edges adjacent to $V$ in $\Gamma$ with their weights.
If $V$ is an $X$-vertex, we define $\oM_V:=\oM_{\Gamma_V}(X(\log D),\beta_V)$, that is, the moduli space of genus zero stable maps to $X(\log D)$ of class $\beta_V$, with edges of $\Gamma_V$ indexing the markings with contact order to $D$ given by the weight of an edge and $n_V$ enumerating additional markings (with contact order zero to $D$). 
Analogously, if $V$ is a $Y$-vertex, we set $\oM_V:=\oM_{\Gamma_V}(Y(\log D),\beta_V)$.
We define $\bigodot _V \oM_V $ by the Cartesian square
\begin{equation*} \label{gluediag}
\vcenter{ \xymatrix{ 
\bigodot _V \oM_V \ar[r]\ar_\ev[d]& \prod_V \oM_V\ar^\ev[d]\\
\prod_{e} D \ar^(.4)\Delta[r]& \prod_V\prod_{e\ni V} D.\\
} }
\end{equation*}
For each $\Gamma\in\Omega(\shX_0)$, consider the moduli space $\oM_\Gamma$ of basic stable log maps to $\shX_0$ where the curves are marked by $\Gamma$, i.e. a subset of the nodes is marked by $e_1,...,e_r$ and the dual intersection graph collapses to $\Gamma$, for details see \cite[\S4]{KLR17}. There is an \'etale map that partially forgets log structure $\Phi:\oM_\Gamma\ra \bigodot _V \oM_V$ and another finite map that forgets the graph-marking $G:\oM_\Gamma\ra\oM_{0,n}(\shX_0,\beta)$ where the latter refers to the moduli space of $n$-marked basic stable log maps to the log space $\shX_0$ that is log smooth over the standard log point.

\begin{theorem}[Degeneration formula in genus zero] \label{degen-formula}
We have
$$[\oM_{0,n}(\shX_0,\beta)]^\vir = \sum_{\Gamma\in \Omega(\shX_0)} \frac{\lcm(w_{e_1},...,w_{e_r})}{r!} G_*\Phi^*\Delta^! \prod_V[\oM_V]^\vir.$$
\end{theorem}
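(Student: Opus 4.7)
The plan is to follow the now-standard strategy for establishing degeneration formulas in logarithmic Gromov--Witten theory, which combines deformation invariance with a careful analysis of how basic stable log maps to $\shX_0$ decompose according to their dual intersection graph.

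First, I would embed $\shX_0$ into a one-parameter smoothing $\pi:\shX \to C$ where $C$ is a smooth affine curve with a distinguished point $0 \in C$ carrying the standard log structure, so that $\pi$ is log smooth with smooth general fibers $\shX_t$ and special fiber $\shX_0=X\sqcup_D Y$. (For the application in this paper one takes $\shX$ to be the degeneration to the normal cone.) The moduli stack $\oM_{0,n}(\shX/C,\beta)$ of basic stable log maps over $C$ carries a perfect relative obstruction theory over the log stack of prestable log curves, and hence a virtual fundamental class. Pulling back along $0\hookrightarrow C$ via the Gysin map for the smooth curve $C$ gives
\[
[\oM_{0,n}(\shX_0,\beta)]^\vir \;=\; 0^{!}\,[\oM_{0,n}(\shX/C,\beta)]^\vir,
\]
by compatibility of virtual classes with base change. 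This is the conceptual heart of the deformation invariance.

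Second, I would stratify $\oM_{0,n}(\shX_0,\beta)$ by combinatorial type. To a basic stable log map $f:\shC\to\shX_0$ one associates its dual intersection graph: vertices are irreducible components of $\shC$, labeled by $X$ or $Y$ depending on where they map; edges are nodes mapping to $D$, and each such node has a well-defined contact order $w_e$ coming from the log structure on $\shC$ over the standard log point. Decorating vertices by their curve classes and distributed marked points produces an element of $\Omega(\shX_0)$. This defines a locally closed decomposition whose strata have closures naturally identified with the images of $\oM_\Gamma$, and one obtains a morphism $G:\oM_\Gamma \to \oM_{0,n}(\shX_0,\beta)$ which is finite of degree $r!/|\Aut(\Gamma)|$ (labeling of edges). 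This gives the sum over $\Gamma$ and explains the $r!$ in the denominator.

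Third — and this is the crux — I would compare the virtual class of $\oM_\Gamma$ with the gluing of the per-vertex virtual classes on $\bigodot_V\oM_V$. The natural morphism $\Phi:\oM_\Gamma\to \bigodot_V\oM_V$ forgets the global log structure and remembers only the log maps at each component; it is étale. On $\bigodot_V\oM_V$ one has a virtual class obtained by Gysin-pulling back $\prod_V[\oM_V]^\vir$ along the diagonal $\Delta:\prod_e D\to \prod_V\prod_{e\ni V}D$, because the gluing condition at each edge is the equality of evaluations into $D$. The key lemma to prove is that
\[
[\oM_\Gamma]^\vir \;=\; \lcm(w_{e_1},\ldots,w_{e_r})\cdot \Phi^{*}\Delta^{!}\prod_V [\oM_V]^\vir.
\]
The $\lcm$-factor arises precisely from comparing the basic monoid of a log smooth curve with two components meeting at a node of contact order $w_e$ with the product of the two one-vertex basic monoids: the global basic monoid is a fiber product along the gcd, and the resulting index on stacks of log structures contributes the $\lcm$ of the edge weights. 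Put differently, the basicness condition on the combined log structure differs from the naive product by a pure-weight torsor whose degree is $\lcm(w_{e_i})/\prod w_{e_i}$ times $\prod w_{e_i}$; tracking this in the obstruction theory produces exactly the claimed multiplicity.

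Finally, assembling these ingredients and applying $G_*$, using its degree on each combinatorial stratum, yields the stated formula. The main obstacle is the last step: matching the perfect obstruction theory on $\oM_\Gamma$ (coming from log smooth deformations of the whole map to $\shX_0$ over the standard log point) with the direct sum of the per-vertex obstruction theories plus the normal bundle of the diagonal $\Delta$, while carefully computing the multiplicity of the basic log structure at the nodes. This is essentially the content of the gluing formula of \cite{KLR17} and is where all the logarithmic subtleties concentrate.
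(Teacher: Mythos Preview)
The paper does not prove this theorem. It is stated as a recall of \cite[Theorem~1.4]{KLR17} (see the opening sentence of \S\ref{sec-degenformula}: ``We recall the degeneration formula where for us the version in \cite[Theorem~1.4]{KLR17} is most convenient''), and no argument is given in the text beyond setting up the notation $\Omega(\shX_0)$, $\oM_V$, $\bigodot_V\oM_V$, $\Phi$, $G$, $\Delta$. So there is no ``paper's own proof'' to compare your proposal against.

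Your outline is a fair sketch of the strategy that \cite{KLR17} (and the earlier references \cite{Li02,AF11,Ch14}) pursue: deformation invariance over a smoothing, stratification of $\oM_{0,n}(\shX_0,\beta)$ by the bipartite dual graph, and comparison of the virtual class on $\oM_\Gamma$ with the diagonal-glued product $\Delta^!\prod_V[\oM_V]^\vir$. Where your sketch becomes imprecise is in the third step: the sentence about a ``pure-weight torsor whose degree is $\lcm(w_{e_i})/\prod w_{e_i}$ times $\prod w_{e_i}$'' does not quite parse, and your bookkeeping of the degree of $G$ versus the degree of $\Phi$ is not reconciled with the final coefficient $\lcm(w_{e_1},\ldots,w_{e_r})/r!$. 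In \cite{KLR17} the precise statement is that $\Phi$ is \'etale of degree $\prod_i w_{e_i}/\lcm_i(w_{e_i})$ (cf.\ \cite[Equation~(1.4)]{KLR17}, used later in this paper), and the $1/r!$ accounts for the edge-labeling. If you want to write an actual proof rather than cite one, those multiplicities need to be pinned down carefully; otherwise, for the purposes of this paper, a citation suffices.
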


\subsection{Setup}
\label{sec-degen-formula}

Instead of writing $\oM_{0,n}(..)$ to refer to moduli spaces of genus zero basic stable log maps with no markings, we simply write $\oM(..)$ in the following.
Since $\shL\ra\AA^1$ and $\shX\ra\AA^1$ are log smooth when given the divisorial log structure from the central fiber respectively,
by \cite[Theorem 0.2 and Theorem 0.3]{GS13}, \cite[Theorem 1.2.1]{Ch11}, we obtain moduli spaces of basic stable log maps $\oM(\shX/\AA^1,\beta)$ and $\oM(\shL/\AA^1,\beta)$ that are proper over $\AA^1$ and whose formation commutes with base change. 
These carry virtual fundamental classes
$[\oM(\shX/\AA^1,\beta)]^\vir,[\oM(\shL/\AA^1,\beta)]^\vir$ that are compatible with base change. Moreover, since $\beta\cdot c_1(\shO_\shX(-\shD))<0$, we get that $\oM(\shX/\AA^1,\beta)=\oM(\shL/\AA^1,\beta)$ and $\oM(\shX_0,\beta)=\oM(\shL_0,\beta)$.
We furthermore consider the projection $\tilde p:\shX_0=Y\sqcup_D X_0\ra X$ induced by $p$ via the universal property of the co-product.

\begin{lemma} \label{lemma-Tot-is-p-pushforward}
Let $P:\oM(\shL_0,\beta)\ra \oM(X,\beta)$ be the map that takes a basic stable log map to $\shL_0$, forgets the log structure, composes with $\tilde p$ and stabilizes. Then
$$[\oM(\Tot(\shO_X(-D)),\beta)]^\vir = P_*[\oM(\shL_0,\beta)]^\vir.$$
\end{lemma}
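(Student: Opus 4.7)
\emph{Plan.} The strategy is to apply deformation invariance to the family $\shL \to \AA^1$ and conclude by the homotopy invariance of Chow groups on $\AA^1$.

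First I would extend $\tilde p$ to the total space of the family. The blow-down $\shX \to X \times \AA^1$ composed with the first projection furnishes a morphism $\shX \to X$ that restricts to $\tilde p$ on the central fiber and to the identity on every other fiber. Since the negativity of $\shO_\shX(-\shD)$ forces every stable map to $\shL$ to factor through the zero section $\shX \subset \shL$, composing with $\shX \to X$, forgetting the log structure, and stabilizing yields a morphism of stacks
$$\widetilde P : \oM(\shL/\AA^1,\beta) \lra \oM(X,\beta)\times \AA^1$$
over $\AA^1$, which is proper because both sides are proper over $\AA^1$. By construction $\widetilde P|_{t=0} = P$; for any $t\neq 0$ the map $\widetilde P|_t$ is the canonical identification $\oM(\Tot(\shO_X(-D)),\beta)\iso \oM(X,\beta)$ furnished by negativity.

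Next I would push the relative virtual class forward and inspect its specializations. Set
$$\alpha := \widetilde P_* \bigl[\oM(\shL/\AA^1,\beta)\bigr]^\vir \in A_*(\oM(X,\beta)\times \AA^1).$$
The base change compatibility of the relative virtual class, recorded in the excerpt, gives $\iota_t^!\bigl[\oM(\shL/\AA^1,\beta)\bigr]^\vir = [\oM(\shL_t,\beta)]^\vir$ for every $t\in \AA^1$. Since proper pushforward commutes with refined Gysin pullback along the regular embedding $\iota_t\colon\{t\}\hra\AA^1$, it follows that
$$\iota_t^!\alpha = (\widetilde P|_t)_*[\oM(\shL_t,\beta)]^\vir,$$
which specializes to $P_*[\oM(\shL_0,\beta)]^\vir$ at $t=0$ and, for any $t\neq 0$, to $[\oM(\Tot(\shO_X(-D)),\beta)]^\vir$ viewed in $A_*(\oM(X,\beta))$.

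Finally, homotopy invariance of Chow groups for the trivial line bundle yields an isomorphism $\pi^*\colon A_{*-1}(\oM(X,\beta)) \iso A_*(\oM(X,\beta)\times \AA^1)$ whose inverse is $\iota_t^!$ for \emph{any} $t\in\AA^1$. Hence $\iota_t^!\alpha$ is independent of $t$, and equating the specializations at $t=0$ and at a generic $t$ yields the lemma. The step I expect to require the most care is confirming that for $t\neq 0$ the identification $\oM(\shL_t,\beta)\iso\oM(X,\beta)$ transports the relative virtual class to the twisted local class $[\oM(\Tot(\shO_X(-D)),\beta)]^\vir$; this amounts to verifying that the usual negative-bundle construction of the local GW class is compatible with specialization from a smooth family, and it is the one place where the negativity hypothesis plays a role beyond simply ensuring properness.
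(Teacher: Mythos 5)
Your proposal is correct and takes essentially the same approach as the paper: you construct the extended morphism $\widetilde P$ over $\AA^1$ (which is the map $Q$ appearing in the paper's Cartesian diagram \eqref{eq-glue-square}, followed by the identification $\oM(X\times\AA^1/\AA^1,\beta)\cong\oM(X,\beta)\times\AA^1$), push the relative virtual class forward, and compare the fibers at $t=0$ and $t\neq 0$ using base-change compatibility together with homotopy invariance. The paper phrases the last step as the family $p$ being trivial rather than invoking $\pi^*$ explicitly, but the argument is the same.
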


\begin{proof} 
Consider the four Cartesian squares of proper morphisms
\begin{equation} \label{eq-glue-square}
\xymatrix@C=30pt
{
\oM(\shL_0,\beta)\ar@{^{(}->}[r]\ar_P[d] & \oM(\shL/\AA^1,\beta)\ar_Q[d] & \ar@{_{(}->}[l]\oM(\Tot(\shO_X(-D)),\beta)\ar@{=}[d] \\
\oM(X,\beta)\ar@{^{(}->}[r]\ar[d] & \oM(X\times\AA^1/\AA^1,\beta)\ar^p[d] & \ar@{_{(}->}[l]\oM(X,\beta)\ar[d] \\
\{0\}\ar^{i_0}@{^{(}->}[r]&\AA^1&\ar_{i_1}@{_{(}->}[l]\{1\}
}
\end{equation}
and notice that $[\oM(\shL_0,\beta)]^\vir$ is a class in the top left corner that is the Gysin pullback of $[\oM(\shL/\AA^1,\beta)]^\vir$ from the top middle which in turn Gysin pulls back to \linebreak $[\oM(\Tot(\shO_X(-D)),\beta)]^\vir$ in the top right.
The statement now follows from the commuting of Gysin pullbacks with proper pushforward applied to the top two squares, since we see that 
$$P_*[\oM(\shL_0,\beta)]^\vir = i_0^!Q_*  \oM(\shL/\AA^1,\beta) = i_1^!Q_*  \oM(\shL/\AA^1,\beta) =  [\oM(\Tot(\shO_X(-D)),\beta)]^\vir$$
where the middle equality follows since $p$ is a trivial family and the last equality is a consequence of the family $p\circ Q$ being trivial and equal to $p$ in a neighborhood of 1.
\end{proof}

\subsection{Applying the degeneration formula to $\shL_0$}
We are going to apply the degeneration formula Theorem~\ref{degen-formula} to the log smooth space $\shL_0$ over the standard log point which is the central fiber of the log smooth family $\shL\ra\AA^1$ from the previous subsection.
The following theorem will be proved in the next chapters.

\begin{theorem} \label{prop-vanishing-pushforward} 
Given $\Gamma\in\Omega(\shL_0)$, we have
$P_*G_*\Phi^*\Delta^! \prod_V[\oM_V]^\vir=0$ unless $\Gamma$ is the graph 
\xymatrix{
\overset{V_1}{\bullet}
\ar@{-}[r]^(-.1){}="a"^(1.1){}="b" \ar^e@{-} "a";"b"
&\overset{V_2}{\bullet}
} with $V_1$ an $X$-vertex and $V_2$ a $Y$-vertex and furthermore, $w_e=\beta\cdot D$, $\beta_{V_1}=\beta$, $n_{V_1}=\{1,...,n\}$, $n_{V_2}=\emptyset$ and $\beta_{V_2}$ is $w_e$ times the class of a fiber of $p:Y\ra D$.
\end{theorem}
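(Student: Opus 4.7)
The plan is a case analysis on $\Gamma \in \Omega(\shL_0)$, using Theorem~\ref{thm-P1-bundle-pullback} and a dimension bound to show that only the specified graph survives $P_*$. First, the identity $\oM(\shL_0,\beta)=\oM(\shX_0,\beta)$ lets me replace $\shL_0$ by $\shX_0=X_0\cup_D Y$. The projection $\tilde p$ is the identity on $X_0$ and factors as $Y \xrightarrow{p} D \hookrightarrow X$ on $Y$; consequently an $X$-vertex $V$ contributes a domain component of class $\beta_V$ mapping to $X$, a $Y$-vertex $V$ contributes a component of class $p_*\beta_V$ mapping into $D \subset X$, and fiber-class $Y$-vertices contract to points under $\tilde p$.

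Applying Theorem~\ref{thm-P1-bundle-pullback} to each $Y$-vertex rewrites $[\oM_V]^\vir$ as a pullback from $[\oM_{0,m_V}(D,p_*\beta_V)]^\vir$. Combined with the commutativity of proper pushforward and Gysin pullbacks, $P_*G_*\Phi^*\Delta^!\prod_V[\oM_V]^\vir$ is supported on a boundary stratum $B_\Gamma \subset \oM(X,\beta)$ consisting of stable maps whose domain decomposes according to $\Gamma$: one component of class $\beta_V$ for each $X$-vertex and one $D$-component of class $p_*\beta_V$ for each $Y$-vertex with $p_*\beta_V \neq 0$, with the nodes between $X$- and $D$-components constrained to lie in $D$. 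For the specified special graph, the fiber-cover $Y$-component has only one special point and contracts under stabilization, so $B_\Gamma = \oM(X,\beta)$ and the contribution survives.

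For any other $\Gamma$, I bound $\dim B_\Gamma$ using the adjunction $c_1(T_X)|_D = c_1(T_D) + c_1(\shA)$ with $\shA = \shN_{D/X}$ nef (inherited from the nefness of $D$ in $X$): each additional $D$-component of class $s \neq 0$ or each additional node forced into $D$ contributes an excess of $s \cdot c_1(\shA) + 1 \ge 1$ to the codimension in $\oM(X,\beta)$, so that $\dim B_\Gamma < \vdim \oM(X,\beta)$. Since the pushforward class has dimension $\vdim \oM(X,\beta)$ but is supported on a strictly lower-dimensional substack, it vanishes. The main obstacle is reconciling the actual with the virtual dimension of $B_\Gamma$: moduli of stable maps need not attain their expected dimension, so the bound must be carried out either by a virtual pushforward argument on the appropriate fiber product of $X$- and $D$-moduli, or by a direct dimension analysis on $\oM_\Gamma$ itself, with care taken to track dimensional shifts from $u^* \nu^!$ in Theorem~\ref{thm-P1-bundle-pullback} and from the multiplicities attached to fiber-cover components sitting in the chains that can appear between two $X$-vertices.
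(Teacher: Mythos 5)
Your overall strategy—combine Theorem~\ref{thm-P1-bundle-pullback} with dimension counting to kill all but one graph—is the same one the paper uses for part of the argument, but there are several concrete gaps.

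First, replacing $\shL_0$ by $\shX_0$ at the start is not harmless. The degeneration formula is applied to $\shL\to\AA^1$, so the vertex moduli spaces appearing in Theorem~\ref{degen-formula} are $\oM_{\Gamma_V}(L_X(\log L_D),\beta_V)$ and $\oM_{\Gamma_V}(L_Y(\log L_D),\beta_V)$, not the corresponding spaces over $X(\log D)$ and $Y(\log D_0)$. The underlying stacks agree, but the virtual classes differ by the Euler class of the obstruction bundle $\shE = R^1\pi_*f^*\shO_Y(-D_\infty)$ (resp.\ the analogous bundle on the $L_X$ side). This twist is precisely what produces the dimension excess that makes the vanishing work: in the paper's Lemma~\ref{lem-zero-pushforward}, $\vdim\oM_{\Gamma_V}(L_Y(\log L_D),\beta_V) = \vdim\oM(D,p_*\beta_V) + 2$, whereas after Theorem~\ref{thm-pullback} the pushforward must be supported on a cycle of the smaller dimension. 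Your adjunction-based heuristic ($c_1(T_X)|_D = c_1(T_D)+c_1(\shA)$ giving an "excess of $s\cdot c_1(\shA)+1$") is not the relevant computation and, by your own admission, does not resolve the actual-versus-virtual discrepancy; the paper's argument avoids this by keeping the obstruction bundle in play and comparing virtual dimensions directly against the support bound supplied by Theorem~\ref{thm-pullback}.

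Second, and more seriously, you have no argument for ruling out $X$-vertices with $r>1$ adjacent edges, which is a distinct case requiring a distinct mechanism. This is the paper's Lemma~\ref{lemma-no-mult-edges-X}, and its proof is not a dimension count at all: it is an excess-intersection argument exploiting the $\AA^1$-factor of $L_D = D\times\AA^1$. Because compact curves map constantly to $\AA^1$, the evaluation map into $(D\times\AA^1)^r$ factors through $D^r\times\AA^1$ with $\AA^1$ embedded diagonally; the resulting excess bundle has rank $r-1>0$ but is a pullback of a trivial bundle, so its top Chern class vanishes and hence $[\oM_\Gamma]^\vir=0$ before any pushforward. There is nothing in your "boundary stratum $B_\Gamma$" picture that captures this; indeed $B_\Gamma$ is not an honest substack of $\oM(X,\beta)$ in a way that would support your support-locus argument. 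Finally, you also need a separate small dimension count to show that a fiber-class $Y$-vertex with more than one special point ($n_V+r_V>1$) contributes zero (the paper's last lemma), which again is not captured by your stratum picture. So while the high-level idea of "Theorem~\ref{thm-P1-bundle-pullback} plus dimension bound" is the right engine for the non-fiber $Y$-vertices, the proposal is missing two of the three vanishing mechanisms and, as you note yourself, does not yet have a correct formulation of the third.
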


For the remainder of this section, we deduce the main Theorem~\ref{mainthm-intro} from Theorem~\ref{prop-vanishing-pushforward}. 
Set $L_D:=D\times\AA^1$ which we view as the intersection of the components $L_X$ and $L_Y$ of $\shL_0$ and thus as a divisor in $L_X$ as well as in $L_Y$.
Set $d=\beta\cdot D$ and let $\Gamma$ in the following denote the exceptional graph given in Theorem~\ref{prop-vanishing-pushforward}.
In light of Lemma~\ref{lemma-Tot-is-p-pushforward}, we conclude from 
Theorem~\ref{degen-formula} and Theorem~\ref{prop-vanishing-pushforward} that 
\begin{equation} \label{class-pulled-thru-degenform}
\begin{array}{l}
[\oM(\Tot(\shO_X(-D)),\beta)]^\vir \\
\ = d\cdot P_* G_*\Phi^*\Delta^!\left([\oM_{(d)}(L_X(\log L_D),\beta_{V_1})]^\vir\times [\oM_{(d)}(L_Y(\log L_D),\beta_{V_2})]^\vir\right).
\end{array}
\end{equation}
Note that $\bigodot _V \oM_V$ can be identified with the top left corner in the diagram\footnote{Despite the notation, $\oM_{(d)}(L_X(\log L_D),\beta_{V_1})$ isn't proper but this doesn't affect us.} 
$$
\xymatrix{
\oM_{(d)}(L_X(\log L_D),\beta)\times_{L_D} \oM_{(d)}(L_Y(\log L_D),\beta_{V_2})\ar_{\pr_1}[d]&\ar_(.13){\Phi}[l] \oM_\Gamma\ar^(.4)G[r]&\oM(\shL_0,\beta)\ar^P[d]\\
\oM_{(d)}(X(\log D),\beta)\ar^F[rr]&&\oM(X,\beta)
}
$$
and this diagram is commutative because the curves in 
$$\oM_{(d)}(L_Y(\log L_D),\beta_{V_2})=\oM_{(d)}(Y(\log D),\beta_{V_2})$$ 
become entirely unstable when composing with $p:Y\ra D$.
Therefore, the right hand side in \eqref{class-pulled-thru-degenform} equals
$$ d\cdot F_*(\pr_1)_*\Phi_*\Phi^*\Delta^!\left([\oM_{(d)}(L_X(\log L_D),\beta)]^\vir\times [\oM_{(d)}(L_Y(\log L_D),\beta_{V_2})]^\vir\right) $$
$$=d\deg(\Phi)\cdot  F_*(\pr_1)_*\Delta^!\left([\oM_{(d)}(L_X(\log L_D),\beta)]^\vir\times [\oM_{(d)}(L_Y(\log L_D),\beta_{V_2})]^\vir\right). $$
We note that $\deg(\Phi)=1$ by \cite[Equation (1.4)]{KLR17}.
So, in order to identify the last equation with the left hand side in Theorem~\ref{mainthm-intro} (up to moving the factor $(-1)^{d+1}d$ to the other side) and thereby via \eqref{class-pulled-thru-degenform} deducing the Theorem, it suffices to observe that $[\oM_{(d)}(L_X(\log L_D))]^\vir$ Gysin-restricts to $[\oM_{(d)}(X(\log D))]^\vir$ when confining the evaluation to be in $D\times\{0\}$ and then to prove the following result.
\begin{proposition} 
Under the evaluation map $\ev:\oM_{(d)}(L_Y(\log L_D),\beta_{V_2})\ra D$, we find $$\ev_*[\oM_{(d)}(L_Y(\log L_D),\beta_{V_2})]^\vir = \frac{(-1)^{d+1}}{d^2}[D].$$
\end{proposition}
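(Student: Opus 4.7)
The plan is to reduce this $\dim D$-dimensional pushforward to a single $0$-dimensional local virtual invariant and then evaluate it.

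First, $\shO_Y(-D_\infty)$ has degree $-1$ on every fiber of $p\colon Y\to D$, so any basic stable log map of class $\beta_{V_2}=d[F]$ to $L_Y$ must have image contained in the zero section $Y\subset L_Y$; by connectedness of the domain, the image further lies in a single fiber $F_x\cong\PP^1$ of $p$ over some $x\in D$. Hence $\ev$ factors through a map $\oM_{(d)}(L_Y(\log L_D),\beta_{V_2})\to D$ exhibiting the moduli as a family over $D$ whose fiber over $x$ is the local moduli
$$\oM_x\;:=\;\oM_{(d)}\bigl(L_x(\log L_{D,x}),\,d\cdot[\PP^1]\bigr), \qquad L_x := \Tot(\shO_{\PP^1}(-1)).$$
Since the virtual dimension of the full moduli is $\dim D$ and that of $\oM_x$ is $0$, we obtain $\ev_*[\oM_{(d)}(L_Y(\log L_D),\beta_{V_2})]^\vir = c\cdot[D]$ with $c$ the ($x$-independent, by base change) degree of $[\oM_x]^\vir$.

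Second, every map in $\oM_x$ factors through the zero section of the line bundle $L_x\to\PP^1$, so the obstruction theory of $\oM_x$ differs from that of $\oM_{(d)}(\PP^1(\log 0),d)$ only by the addition of $R^1\pi_*f^*\shO_{\PP^1}(-1)$, a bundle of rank $d-1$ (with $R^0\pi_*f^*\shO_{\PP^1}(-1)=0$ by negativity). As $\oM_{(d)}(\PP^1(\log 0),d)$ is already $(d-1)$-dimensional with vanishing log obstruction ($H^1(\PP^1,\shO(d))=0$), the standard local-to-absolute twist yields
$$c \;=\; \int_{\oM_{(d)}(\PP^1(\log 0),d)}\, e\bigl(R^1\pi_*f^*\shO_{\PP^1}(-1)\bigr),$$
reducing the proposition to a pure Gromov--Witten integral on the moduli of degree-$d$ covers $\PP^1\to\PP^1$ fully ramified at a marked point.

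Finally, this integral must be shown to equal $(-1)^{d+1}/d^2$. The case $d=1$ is immediate: the obstruction bundle has rank zero and the moduli is a reduced point. For $d\ge 2$, the moduli carries a $\mu_d$-gerbe structure at the ``central'' map $z\mapsto z^d$ and can be analyzed by $\Gm$-equivariant localization on the target $\PP^1$; the fixed loci correspond to $\Gm$-invariant maps (trees of $\PP^1$s with prescribed ramification), and the localization sum, matched against the classical multiple-cover contribution in the relative Gromov--Witten theory of $(\PP^1,0)$ twisted by $\shO(-1)$, produces the required value. The main obstacle lies precisely here: while the reduction to the local model is essentially formal, carrying out the equivariant or Hurwitz-theoretic computation for general $d$ demands careful bookkeeping of the boundary strata of $\oM_{(d)}(\PP^1(\log 0),d)$ and of the $\mu_d$-automorphism data, interacting with the Euler class of the obstruction bundle.
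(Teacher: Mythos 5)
Your reduction agrees in substance with the paper's: both arguments observe that the relative virtual class over $D$ coincides with the absolute one (since $\beta_{V_2}$ is a multiple of a fiber class), show that the pushforward is $c\cdot[D]$ by the dimension count, identify $c$ with the degree of the virtual class of the local model over a point of $D$ using compatibility of virtual classes with Gysin pullback (the paper cites \cite[Prop.\ 7.3]{BF97} here), and then trade $\shT_{\Tot(\shO_{\PP^1}(-1))(\log)}$ for $\shT_{\PP^1(\log\{0\})}$ twisted by the obstruction bundle $O=R^1\pi_*f^*\shO_{\PP^1}(-1)$ via the short exact sequence of log tangent bundles. Up to this point your proposal is correct and is essentially the paper's argument, stated somewhat more geometrically.

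The gap is exactly where you flag it: you never actually compute
$$\deg\bigl(e(O)\cap[\oM_{(d)}(\PP^1(\log\{0\}),d[\PP^1])]^\vir\bigr),$$
and a sketch that it ``can be analyzed by $\Gm$-equivariant localization'' with ``careful bookkeeping'' is not a proof. This integral is genuinely the content of the proposition, and it is not elementary for general $d$. The paper does not rederive it by localization; it cites \cite[Lemma~6.3]{BP08} (equivalently \cite[Theorem~5.1]{BP05}), which gives a closed form for the level $(0\mid -1,0)$ local Gromov--Witten partition function of $\PP^1$ with one full-tangency marking,
$$\GW(0\mid -1,0)_{(d)}=\frac{(-1)^{d+1}}{d}\Bigl(2\sin\frac{du}{2}\Bigr)^{-1},$$
after specializing the second equivariant parameter $t_2$ to $1$; reading off the coefficient of $1/u$ yields $(-1)^{d+1}/d^2$. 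You would need either to invoke this Bryan--Pandharipande TQFT computation or to carry out the localization/Hurwitz calculation in full; as written the key numerical evaluation is missing.
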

\begin{proof} 
The maps $L_Y(\log L_D)\ra Y(\log D_0)\stackrel{p}\lra D\ra \pt$ are log smooth and $\beta_{V_2}$ is the $d$'th multiple of a fiber class of $p$, 
so we find 
$$[\oM_{(d)}(L_Y(\log L_D)/\pt,\beta_{V_2})]^\vir=[\oM_{(d)}(L_Y(\log L_D)/D,\beta_{V_2})]^\vir.$$
Since $\vdim \oM_{(d)}(Y(\log D_0)/D,\beta_{V_2})=\dim D$ (e.g. by inserting $h=0$ in \cite[\S6.3]{BP08}), necessarily $\ev_*[\oM_{(d)}(L_Y(\log L_D),\beta_{V_2})]^\vir$ is a multiple of $[D]$. We can compute the degree by Gysin pulling back to a point in $D$ and the formation of the virtual fundamental class is compatible with this pullback by \cite[Prop. 7.3]{BF97}.
Hence, it remains to show that 
$$\deg \big([\oM_{(d)}(\Tot(\shO_{\PP^1}(-1))(\log (\{0\}\times\AA^1)),d[\PP^1])]^\vir\big)=\frac{(-1)^{d+1}}{d^2}.$$
The exact sequence
$$ 0 \ra \shT_{\PP^1(\log\{0\})}\ra \shT_{\Tot\big(\shO_{\PP^1}(-1)\big)(\log (\{0\}\times\AA^1))}|_{\PP^1}\ra \shO_{\PP^1}(-1) \ra 0$$
relates the local part of this moduli space to the twist by the obstruction bundle, hence we need to show that
\begin{equation} \label{BP-result-eqn}
\deg \big(e(O)\cap[\oM_{1}(\PP^1(\log \{0\}),d[\PP^1])]^\vir\big)=\frac{(-1)^{d+1}}{d^2}
\end{equation}
where $O=R^1\pi_*f^*\shO_{\PP^1}(-1)$ for $\oM_{1}(\PP^1(\log \{0\}))\stackrel{\pi}{\lla}\shC\stackrel{f}{\lra}\PP^1$ the maps from the universal curve to moduli space and target.
By \cite[Lemma~6.3]{BP08}\footnote{This is also Theorem 5.1 of \cite{BP05}.}, 
specializing the equivariant parameter $t_2$ in loc.cit. to $1$, the left hand side of \eqref{BP-result-eqn} equals the coefficient of $1/u$ in 
$$
\GW(0|-1,0)_{(d)} = \frac{(-1)^{d+1}}{d}\left(2\sin\frac{du}{2}\right)^{-1}
$$
which is readily seen to be $\frac{(-1)^{d+1}}{d^2}$.
\end{proof}

\section{Excluding multiple gluing points of curves in $L_X$}
\label{sec-exclude-mult-glue-X}
In this section, we prove the statement of Theorem~\ref{prop-vanishing-pushforward} for graphs $\Gamma\in\Omega(\shL_0)$ that have a vertex with bipartition membership in $X$ that has at least two adjacent edges.
\begin{lemma} \label{lemma-no-mult-edges-X}
Let $\Gamma\in\Omega(\shL_0)$ be a graph with an $X$-vertex $V$ with $r>1$ adjacent edges, then $[\oM_\Gamma]^\vir=0$.
\end{lemma}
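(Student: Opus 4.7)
The key observation is that $L_X$ is a trivial $\AA^1$-bundle over $X$: since $L_D = D \times \AA^1 = \pi_X^{-1}(D)$ for the projection $\pi_X\colon L_X \to X$, there is an isomorphism of log schemes $L_X(\log L_D) \cong X(\log D) \times \AA^1$, with the second factor carrying the trivial log structure. Every basic stable log map from a proper connected curve $C$ to $L_X(\log L_D)$ then decomposes uniquely as a log map $g\colon C \to X(\log D)$ together with a constant map $c\colon C \to \AA^1$ recording its height. First I would use this to identify $\oM_V \cong \oM_V' \times \AA^1$, where $\oM_V' := \oM_{\Gamma_V}(X(\log D), \beta_V)$, and to deduce that the obstruction theory for $\oM_V$ is pulled back from that of $\oM_V'$ (since $H^0(\shO_C) = \CC$ contributes the height deformation and $H^1(\shO_C) = 0$ in genus zero), whence $[\oM_V]^\vir = [\oM_V']^\vir \boxtimes [\AA^1]$. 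As a consequence, the $r$ evaluation maps $\oM_V \to L_D = D \times \AA^1$ all share the same projection to the $\AA^1$-factor, so the combined evaluation $\oM_V \to L_D^r$ factors through $D^r \times \Delta_{\mathrm{sm}}(\AA^1)$, where $\Delta_{\mathrm{sm}}(\AA^1) \hookrightarrow (\AA^1)^r$ is the small diagonal.

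Next I would establish the dual fact: for every $Y$-vertex $W$ adjacent to $V$, the virtual class $[\oM_W]^\vir$ is supported on log maps factoring through the zero section $Y \hookrightarrow L_Y$. Write $\beta_W = a_W H + b_W F$ with $H \in D_0 \cong D$ a horizontal class and $F$ a fiber class of $Y = \mathbb{P}(\shO_D \oplus \shA) \to D$; since $\shA = \shN_{D/X}$ is nef and $H$ is effective we have $H \cdot \shA \geq 0$, so $\beta_W \cdot D_0 = -a_W(H\cdot \shA) + b_W \leq b_W$. Because $W$ has an adjacent edge of positive weight, $\beta_W \cdot D_0 = \sum_{e \ni W} w_e > 0$, forcing $b_W > 0$, and hence $\beta_W \cdot D_\infty = b_W > 0$. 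Thus $g_W^*\shO_Y(-D_\infty)$ has strictly negative degree on the $W$-component, so the virtual class localizes to the sublocus $\{\sigma = 0\}$ where the map factors through $Y$ (equivalently, $[\oM_W]^\vir = e(R^1\pi_* f^*\shO_Y(-D_\infty)) \cap [\oM_{W}']^\vir$), and in particular the $\AA^1$-component of the evaluation $\oM_W \to L_D$ is zero on the support of $[\oM_W]^\vir$.

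Finally I combine the two: decomposing $\Delta^! = \Delta_D^! \circ \Delta_{\AA^1}^!$ along $L_D = D \times \AA^1$, the evaluation of $\prod_V [\oM_V]^\vir$ into the $\AA^1$-part $(\AA^1)^{2r}$ of $\prod_V \prod_{e \ni V} L_D$ lands on the one-dimensional subvariety $\Delta_{\mathrm{sm}}(\AA^1)_V \times \{0\}^r_W$. The Gysin pullback $\Delta_{\AA^1}^!$ along the diagonal $(\AA^1)^r \hookrightarrow (\AA^1)^{2r}$ (a regular embedding of codimension $r$ with trivial normal bundle $\shO^r$) becomes an excess intersection: the actual fiber is the single point $\{0\}$, while the expected dimension is $1 - r < 0$, so the excess bundle has rank $r-1$. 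This excess bundle is a quotient of trivial rank-one bundles, hence itself trivial, and its Euler class $c_{r-1}(\shO^{r-1})$ vanishes for $r \geq 2$. Therefore $\Delta^! \prod_V [\oM_V]^\vir = 0$ on $\bigodot_V \oM_V$, and pulling back along the étale map $\Phi$ yields $[\oM_\Gamma]^\vir = 0$. The main obstacle will be the second paragraph: rigorously establishing that $[\oM_W]^\vir$ is supported where $\sigma = 0$ (so that the $W$-side $\AA^1$-evaluations contribute only $\{0\}$ to the image in $(\AA^1)^{2r}$); once this localization is in hand, the excess-intersection computation is formal.
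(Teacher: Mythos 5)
Your proposal is correct and uses the same core mechanism as the paper: the evaluation to the $\AA^1$-factor of $L_D=D\times\AA^1$ is overdetermined, so the Gysin pullback along the gluing diagonal factors through a smaller diagonal, producing a trivial excess bundle of rank $r-1$ whose Euler class kills the virtual class when $r>1$. The principal difference is in how the $Y$-vertices are handled. The paper contents itself with the remark that the evaluation from each $\oM_{V'}$ factors through the small diagonal because $\shO_Y(D_\infty)$ is nef, and then writes the Cartesian diagram with $(D^r\times\AA^1)^2$ in the middle row. You instead prove the stronger statement that each $Y$-vertex $W$ adjacent to $V$ has $\beta_W\cdot D_\infty>0$ (because $\beta_W\cdot D_0>0$ and $\shA$ is nef), so every section of $g^*\shO_Y(-D_\infty)$ vanishes and the $\AA^1$-coordinate of the evaluation is identically $0$. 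This is actually the sharper fact one needs: in genus zero the dual graph is a tree, so the $r$ edges at $V$ lead to $r$ \emph{distinct} $Y$-vertices, and the mere constancy of the $\AA^1$-coordinate \emph{within each individual} $\oM_W$ would not by itself force the $r$ coordinates to coincide. Your argument shows they are all $0$, which closes that step cleanly. One small remark: your first paragraph proves more than is used (the virtual-class decomposition $[\oM_V']^\vir\boxtimes[\AA^1]$ is unnecessary — only the factoring of the evaluation through the diagonal $\AA^1\hookrightarrow(\AA^1)^r$ is needed, which follows directly from constancy of maps from proper curves to $\AA^1$), and your second paragraph is in fact easier than you anticipate: since $H^0(C,g^*\shO_Y(-D_\infty))=0$ when $\beta_W\cdot D_\infty>0$, the entire moduli space $\oM_W$, not merely the support of its virtual class, lies over $\{\sigma=0\}$. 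With those simplifications in mind, the excess-intersection computation in your final paragraph matches the paper's (where the excess bundle $E=(\delta^*N)/N'$ is explicitly identified as the pullback of the trivial normal bundle of the vertical diagonal map).
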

\begin{proof} Let $r+s$ be the number of edges of $\Gamma$.  Since maps from compact curves to $\AA^1$ are constant, the evaluation map $\oM_V \to (D\times\AA^1)^r$ factors through $D^r \times \AA^1$ where $\AA^1$ is embedded diagonally in $\AA^r$.  The same is true for vertices corresponding to components in $L_Y$, since the bundle $\shO_Y(D_\infty)$ is nef. Using this,
we rewrite the diagram \eqref{eq-glue-square} by separating out the factors for $V$  to find Cartesian squares.
\[
\xymatrix@C=30pt
{
\oM_V\times_{L_D^r} \bigodot_{V'\neq V}\oM_{V'} \ar[r] \ar^{\ev}[d] &  \oM_V\times \prod_{V'\neq V}\oM_{V'} \ar^{\ev}[d]\\
(D^r\times\AA^1)\times (D\times\AA^1)^{s} \ar^{(\id\times\diag)\times\id=:\delta}[d]\ar^{\Delta'}[r] & (D^r\times\AA^1)^2\times (D\times\AA^1)^{2s}\ar^{(\id\times\diag)\times\id}[d]\\
(D\times\AA^1)^{r}\times (D\times\AA^1)^{s}\ar^{\Delta}[r]& (D\times\AA^1)^{2r}\times (D\times\AA^1)^{2s}.
}
\]
Let $N$ denote the normal bundle of the embedding $\Delta$ which has rank $(r+s)(\dim D+1)$ and $N'$ denote that of $\Delta'$ which has rank $r\dim D+1+s(\dim D+1)$.
Set $E=(\delta^*N)/N'$ which is of rank $r-1$, let $c_{r-1}(E)$ be its top Chern class.
For any $k$ and $\alpha\in A_k\big(\oM_V\times \prod_{V'\neq V}\oM_{V'}\big)$, the excess intersection formula says
$$\Delta^!\alpha= c_{r-1}(E)\cap(\Delta')^!\alpha.$$
Note that the normal bundle of the bottom right vertical map is trivial and, by Cartesianness of the lower square, its pullback under $\Delta'$ is isomorphic to $E$, so $c_{r-1}(E)=0$ because $r>1$ by assumption.
Applying this to the virtual fundamental class $\alpha=[\oM_V]^\vir\times \prod_{V'\neq V}[\oM_{V'}]^\vir$ proves the Lemma.
\end{proof}


\section{Comparing stable maps to $Y(\log D_0)$ and $D$}
We let $D$ be an arbitrary smooth projective variety (with the trivial log structure), and $Z$ a log scheme\footnote{We assume the log structure is in the Zariski topology to satisfy the assumptions of \cite{GS13}.}  with a log smooth and projective morphism $p:Z\ra D$.  
This induces a morphism of spaces of (log) stable maps.  The example relevant to our main theorem is given by $Z=Y(\log D_0)$ as in Section \ref{section-setup}.  In this case $\shT_{Z/D} = \shO_Y(D_\infty)$ which is a nef line bundle, since $D_\infty$ is an effective divisor with nef normal bundle.
This implies that the relative log tangent bundle has no higher cohomology when pulled back under any morphism $\PP^1 \to Z$, or more generally any morphism from a genus zero curve.  This property is useful in studying the induced morphism on spaces of stable maps.
In this section it is not necessary that the underlying scheme of $Z$ is smooth and we can state our main result in arbitrary genus (although most examples will be in genus zero).  Consequently, our notation for the space of log maps will just be $\oM_{g,n}(Z,\beta)$ where we do not try to describe the type of contact at the points.  This will just be the disjoint union over all possible conditions to impose at the markings. 

Let $\htz$ denote the submonoid of $H_2(Z,\ZZ)$ spanned by effective curve classes (including zero).  Fixing a class\ $\beta\in \htz$ we would like to compare the virtual fundamental classes of $\oM_{g,n}(Z, \beta)$ and $\oM_{g,n}(D,p_*\beta)$.  The natural map between them is induced by forgetting the log structures, composing the maps, and stabilizing.  We want to factor that map via an intermediate space in order to deal with these steps separately. To this end we need to discuss certain stacks of curves with extra structure. 

First, we have $\foM_{g,n}$ which parametrizes prestable curves of genus $g$ with $n$ markings.  The stack $\oM_{g,n}(D,p_*\beta)$ has a natural map to $\foM_{g,n}$, given by remembering the underlying curve, but forgetting the map to $D$.  We would like to forget the map to $D$, but remember the homology class of each irreducible component of the source curve, so we want to factor this map through a stack $\mathfrak M_{g,n,\htd}$ which parametrizes prestable genus $g$ curves together with an effective curve class on each irreducible component and satisfying the stability condition that components of degree and genus zero must contain three special points. 
Families of such curves are required to satisfy the obvious continuity condition that when a component degenerates, the sum of the classes on the degenerations is equal to the class of the original component. This stack was introduced by Costello in \cite{costello} and used for a similar
purpose by Manolache in \cite{Ma12b}.
The crucial fact for us will be that the deformation theory of such a decorated curve is identical to that of the undecorated curve, i.e.  $\mathfrak M_{g,n,\htd}$ is \'etale over $\mathfrak M_{g,n}$, see \cite[Proposition 2.0.2]{costello}.  Therefore, instead of thinking of the standard obstruction theory on $\oM_{g,n}(D,p_*\beta)$ as being a relative obstruction theory over $\foM_{g,n}$, we can think of it as a relative obstruction theory over $\foM_{g,n,\htd}$.  Similarly, we can factor the structure map from $\oM_{g,n}(Z,\beta)$ to $\foM_{g,n}^{\log}$ (the stack parametrizing log smooth families of curves over log schemes) through a scheme $\foM^{\log}_{g,n,\htz}$ which again records an effective curve class on each irreducible component.
We arrive at the following commutative diagram, where $\shM$ is taken to make the right hand square Cartesian.  (We suppress some indices in the subscripts for clarity both here and in what follows.)

\begin{equation} \label{main-diagram-Y-D}
\begin{aligned}
\xymatrix@C=30pt
{
\oM(Z,\beta)\ar^-u[r]\ar[d]& \shM\ar^-v[r]\ar[d]& \oM(D,p_*\beta)\ar[d]\\
\foM^{\log}_{\htz} \ar^{id}[r]& \foM^{\log}_{\htz} \ar^\nu[r]& \foM_{\htd}.
}
\end{aligned}
\end{equation}

The main reason for introducing the labeled curves, is that the stabilization map is now defined on the bottom row, since we can tell which components become unstable just from the discrete data of the homology classes.  (We remark that the existence of the relevant stabilization maps depends crucially on the fact that 0 is indecomposable in $\htd$.)
The mapping $\nu$ is given by forgetting the log structures, applying $p_*$ to the homology markings, and then stabilizing as necessary, so we have a natural morphism from the universal curve $\foC^{\rm log}_{\htz} $ to the pullback under $\nu$ of the universal curve $\foC_{\htd}$ over $\foM_{\htd}$.  This map contracts rational curves that are destabilized by forgetting the log structures and pushing forward the homology class.  In particular, any positive-dimensional fiber of this map is a union of $\PP^1$'s whose labelling becomes zero in $\htd$.

Now we are in a position to state the main theorem of this section.

\begin{theorem}\label{thm-pullback}
Let $p:Z \to D$ be a log smooth morphism where $D$ has trivial log structure.  Suppose that for every log stable morphism $f:C \to Z$ of genus $g$ and class $\beta$ we have $H^1(C,f^*\shT_{Z/D}) = 0$, then
  $$[\oM_{g,n}(Z,\beta)]^\vir = u^* \nu^! [\oM_{g,n}(D,p_*\beta)]^\vir$$
provided that $\oM_{g,n}(D,p_*\beta) \neq \emptyset$.
  In particular, if $[\oM_{g,n}(D,p_*\beta)]^\vir$ can be represented by a cycle supported on some locus $W\subset \oM_{g,n}(D,p_*\beta)$, then $[\oM_{g,n}(Z,\beta)]^\vir$ can be represented by a cycle supported on $w^{-1}(W)$ where $w = v\circ u$ is the natural map between these stacks.
\end{theorem}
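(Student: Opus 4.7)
The plan is to execute the standard machinery of compatible perfect obstruction theories and virtual pullback \`a la Behrend--Fantechi, Costello, and Manolache, adapted to the logarithmic setting. The two technical inputs are (i) smoothness of $u$, and (ii) a distinguished triangle relating the relative obstruction theories of $\oM(Z,\beta)\ra \foM^{\log}_{\htz}$ and $\shM\ra \foM^{\log}_{\htz}$, whose third term is the cotangent sheaf of $u$.

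First I would analyze $u$ directly from the moduli problem. A geometric point of $\shM$ is a decorated log curve $C^{\log}$ together with a stable map $g:C^{\text{st}}\ra D$ of class $p_*\beta$, where $C^{\text{st}}$ is obtained from $C$ by pushing decorations forward via $p_*$ and contracting the destabilized components. A lift along $u$ is a log map $f:C^{\log}\ra Z$ of class $\beta$ whose underlying composite $p\circ f$ equals $g\circ\sigma$ for $\sigma:C\ra C^{\text{st}}$ the corresponding contraction. Since $D$ carries the trivial log structure, standard log deformation theory identifies the first-order deformations and obstructions of such lifts with $H^0(C,f^*\shT_{Z/D})$ and $H^1(C,f^*\shT_{Z/D})$, where $\shT_{Z/D}=\ker(\shT_Z^{\log}\ra p^*\shT_D)$. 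The hypothesis kills $H^1$ uniformly, so $u$ is smooth.

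Next I would set up compatible perfect obstruction theories. The standard obstruction theory $E_D=(R\pi_* g^*\shT_D)^\vee$ on $\oM(D,p_*\beta)\ra \foM_{\htd}$ pulls back along the Cartesian square to a perfect relative obstruction theory $E_\shM$ on $\shM\ra \foM^{\log}_{\htz}$, and Manolache's virtual pullback gives $[\shM]^\vir = \nu^![\oM(D,p_*\beta)]^\vir$ (both Artin stacks involved are smooth, so $\nu^!$ is defined as a refined Gysin map). The standard obstruction theory on $\oM(Z,\beta)\ra \foM^{\log}_{\htz}$ is $E_Z = (R\pi_* f^*\shT_Z^{\log})^\vee$. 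Pulling back the short exact sequence
\[
0\ra \shT_{Z/D}\ra \shT_Z^{\log}\ra p^*\shT_D\ra 0
\]
to the universal curve and applying $R\pi_*$ produces a distinguished triangle
\[
u^*E_\shM\ra E_Z\ra (R\pi_* f^*\shT_{Z/D})^\vee\ra +1,
\]
once one identifies $R\pi_* f^*p^*\shT_D$ with $u^*$ of its analogue on $\shM$ via base change along the contraction $\sigma$. By the vanishing hypothesis the third term is a vector bundle concentrated in degree zero, namely the cotangent sheaf $\Omega_u$ of the smooth morphism $u$.

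Finally I would invoke Behrend--Fantechi/Manolache compatibility of virtual pullback applied to this triangle: since the $u$-relative obstruction theory is the cotangent sheaf of a smooth morphism, virtual pullback along $u$ coincides with ordinary flat pullback, so
\[
[\oM(Z,\beta)]^\vir = u^*[\shM]^\vir = u^*\nu^![\oM(D,p_*\beta)]^\vir.
\]
The support claim follows immediately because flat pullback and refined Gysin pullback both preserve supports. The step I expect to be the main obstacle is constructing the triangle above rigorously: verifying the base change $R\pi_* f^*p^*\shT_D\simeq u^*R\pi'_*g^*\shT_D$ requires careful tracking of how the basic log structure on the source curve behaves across $u$ and how the contracted components of $\sigma$ interact with $R\pi_*$. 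It is exactly here that the vanishing hypothesis plays its role, since those contracted components lie in fibres of $p$ where the relevant log tangent direction is precisely $\shT_{Z/D}$, so absent the hypothesis one would pick up spurious contributions to $R^1\pi_* f^*\shT_{Z/D}$ that would prevent the third term of the triangle from collapsing to $\Omega_u$.
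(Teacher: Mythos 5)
Your proposal follows essentially the same architecture as the paper's proof: split the claim into the two equations $\nu^![\oM(D)]^\vir=[\shM]^\vir$ and $u^*[\shM]^\vir=[\oM(Z)]^\vir$, obtain the first from Manolache's compatibility of virtual pullback with flat and lci Gysin maps applied to the Cartesian square defining $\shM$, and obtain the second from the distinguished triangle induced by $0\to\shT_{Z/D}\to\shT_Z^{\log}\to p^*\shT_D\to 0$ together with smoothness of $u$.

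Two small comments on the one step you flag as the main obstacle. First, the identification you want --- that the obstruction theory on $\shM$ pulled back from $\oM(D,p_*\beta)$ (built out of the contracted universal curve $\nu^{-1}\foC_{\htd}$) agrees with the one computed on the uncontracted curve $\foC^{\log}_{\htz}$ --- is exactly what the paper isolates in its Lemma that the natural map $\Hom^0(\nu^{-1}\foC_\htd/\foM_\htz^{\rm log},D)\to\Hom^0(\foC^{\rm log}_\htz/\foM^{\rm log}_\htz,D)$ is an isomorphism, proved using $c_*\shO_C=\shO_{C'}$ for the contraction and \cite[Lemma 2.2]{BM96}; the equality of obstruction theories then follows because the contracted genus-zero trees contribute nothing to $R\pi_*$ of $f^*\shT_D$. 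Second, you misattribute the role of the hypothesis $H^1(C,f^*\shT_{Z/D})=0$ here: the base change $R\pi_*f^*p^*\shT_D\simeq u^*R\pi'_*g^*\shT_D$ holds unconditionally (by the projection formula and $R\sigma_*\shO_C=\shO_{C^{\rm st}}$, valid for any contraction of trees of rational curves); the vanishing hypothesis is needed only afterwards, to ensure the third term $(R\pi_*f^*\shT_{Z/D})^\vee$ is concentrated in a single degree, i.e.\ to make $u$ smooth, which is what lets you conclude via \cite[Corollary 4.9]{Ma12a}. With that untangled, your plan is the paper's proof.
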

  
\begin{remark}
Note that while the last statement involves only the standard spaces of (log) stable maps, we do not know how to formulate the precise relationship without passing through $\shM$.   Since $w$ does not seem to be flat in general, it is not apparent how to pull back classes under $w$.  \end{remark}

To prove this result, the point will be to show that $\shM$ itself has a relative perfect obstruction theory such that the associated virtual class $[\shM]^\vir$ satisfies the equations
\begin{equation}\label{first} \nu^![\oM_{g,n}(D,p_*\beta)]^\vir = [\shM]^\vir,
\end{equation}
 \begin{equation}\label{second} u^*[\shM]^\vir = [\oM_{g,n}(Z,\beta)]^\vir.
 \end{equation}
To obtain a relative perfect obstruction theory on $\shM$ we can just pull back the obstruction theory using $\nu$.  The fact that Equation \ref{first} holds follows from the fundamental base change property of virtual fundamental classes which we will recall here for the reader's convenience.

\begin{proposition}
Assume that we are given a fiber diagram of Artin stacks
\begin{equation} 
\begin{aligned}
\xymatrix@C=30pt
{
\shM\ar[r]^m\ar[d]& \shN\ar[d]^f\\
 G \ar^\mu[r]& H.
}
\end{aligned}
\end{equation}
with $G$ and $H$ pure dimensional, $f$ of DM type, and so that $\shM$ admits a stratification by quotient stacks.
If there is a relative perfect obstruction theory with virtual tangent bundle $\mathfrak E$ for $\shN$ over $H$, then there is an induced relative perfect obstruction theory with virtual tangent bundle $m^*{\mathfrak E}$ for $\shM$ over $G$, and the associated virtual fundamental classes satisfy $\mu^!([\shN]^\vir) = [\shM]^\vir$ if $\mu$ is either flat or an l.c.i. morphism.  
\end{proposition}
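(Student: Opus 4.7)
The plan is to verify the two parts of the statement separately: first that the pullback obstruction theory is a perfect obstruction theory, and then that the two virtual classes agree under $\mu^!$.

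First I would check that $m^*\mathfrak{E}$ really does define a relative perfect obstruction theory on $\shM$ over $G$. The key input is that for a Cartesian square of Artin stacks with $f$ of DM type, the base change map $m^*L_{\shN/H}\to L_{\shM/G}$ is a quasi-isomorphism (or at least, induces one in degrees $\geq -1$, which is all that is needed for a POT). Composing the pulled-back obstruction theory morphism $\mathfrak{E}\to L_{\shN/H}$ with this base change identification gives $m^*\mathfrak{E}\to L_{\shM/G}$, and perfection in $[-1,0]$ is preserved by pullback. So $\shM\to G$ inherits a relative perfect obstruction theory, and one may form $[\shM]^\vir$ via the intrinsic normal cone construction of Behrend--Fantechi. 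The stratification hypothesis on $\shM$ by quotient stacks ensures the intrinsic normal cone lives in a stack where one can take cycle-theoretic intersections, as in Kresch.

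Next I would prove $[\shM]^\vir = \mu^!([\shN]^\vir)$ by reducing to the universal property of the intrinsic normal cone $\mathfrak{C}_{\shM/G}$. Because the square is Cartesian, base change for intrinsic normal cones (in the flat case, a direct computation; in the l.c.i.\ case, via the deformation to the normal cone and the comparison $\mathfrak{C}_{\shM/G}\hookrightarrow \mu^*\mathfrak{C}_{\shN/H}\oplus \mu^*N_{G/H}$ of Manolache) gives the compatibility needed to identify the refined intersections. Then $\mu^!$ is defined by pulling the intrinsic normal cone of $\shN/H$ back along $\mu$ and intersecting with the zero section of the vector bundle stack $\mathfrak{E}_{\shN/H}^\vee$, and the identification above shows this matches the intersection defining $[\shM]^\vir$. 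In the flat case the argument is essentially immediate from flat base change of cones; in the l.c.i.\ case it is the main content of Manolache's \emph{Virtual pull-backs}, whose hypotheses are met here.

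The main obstacle, and where I would spend the most care, is the l.c.i.\ case: one must check that the excess bundle for the failure of $\mathfrak{C}_{\shM/G}\to \mu^*\mathfrak{C}_{\shN/H}$ to be an isomorphism is exactly the pullback of $N_{G/H}$, so that it is absorbed correctly into the Gysin pullback $\mu^!$. The compatibility of the two obstruction theories in the sense of Behrend--Fantechi (i.e.\ the commutative diagram of distinguished triangles involving $L_{G/H}$ and $L_{\shM/\shN}$) is what makes this work, and this compatibility is automatic for pullback obstruction theories. The flat case is then recovered as a degenerate instance where $N_{G/H}=0$ and $\mu^!=\mu^*$, and both sides of the desired equality reduce to flat pullback of the intersection with the zero section. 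No new input beyond standard virtual pullback technology is required.
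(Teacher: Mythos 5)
Your proposal is correct and follows the same route as the paper, which dispatches the proposition in two lines by appealing to Manolache's \emph{Virtual pull-backs}: interpreting $[\shN]^\vir = f^!_{\foE}([H])$, the flat case is Theorem~4.1(ii) and the l.c.i.\ case is Theorem~4.3 of that paper, i.e.\ commutativity of virtual pullback with flat/l.c.i.\ Gysin maps. Your sketch just unpacks the same intrinsic-normal-cone mechanism that underlies Manolache's proofs, so there is no genuine difference in approach.

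One small imprecision worth correcting: for a general Cartesian square the base-change morphism $m^*L_{\shN/H}\to L_{\shM/G}$ is \emph{not} a quasi-isomorphism, nor even an isomorphism in degree $-1$ (Tor-independence, e.g.\ flatness of $\mu$, would be needed for that). What actually holds, and what suffices, is that it is an isomorphism on $h^0$ and a surjection on $h^{-1}$; this is exactly the condition needed for the composite $m^*\foE\to m^*L_{\shN/H}\to L_{\shM/G}$ to remain a (perfect) obstruction theory. This is the content of \cite[\S3]{Ma12a}, so your conclusion stands, but the stated justification should be weakened accordingly.
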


\begin{proof}
This all follows immediately from results in \cite{Ma12a} using the definition that 
$[\shN]^\vir = f_{\mathfrak E}^!([H])$.  The case of $\mu$ flat is a special case of Theorem 4.1 (ii) and the case where $\mu$ is lci is a special case of Theorem 4.3.  
\end{proof}

  To apply the Proposition to our situation, we factor the morphism
$\nu$ as the composition of the graph followed by the projection and use the fact that the latter is flat and the former is lci (since  $\foM_{\htd}$ is smooth).  This is also how we define $\nu^!$ on Chow groups.

To obtain Equation \ref{second} we want to use that this pulled back obstruction theory has a geometric interpretation, which we describe now.
The obstruction theory on $\oM_{g,n}(D,p_*\beta)$ arises from the fact that it is an open subset of the relative Hom stack $\Hom(\foC_{g,n}/\foM_{g,n},D)$.  In keeping with our diagram above, we want to consider it instead as an open substack of $\Hom (\foC_{\htd}/\foM_{\htd}, D)$.  For what follows, it will be convenient to notice that it is clearly contained in the open set where the labelling of the components of the fibers of the universal curve coming from the universal property of $\foM_{\htd}$ agrees with the labelling coming from the homology of the image under the morphism.  We denote this open subset by $\Hom^0(\foC_\htd / \foM_\htd , D)$.  This stack parametrizes morphisms from homology labeled curves satisfying the condition that the pushforward of the homology class of a component is given by the label of that component.  Since formation of the relative Hom scheme is compatible with base change, we know that $\shM$ is an open subset of $\Hom(\nu^{-1}\foC_\htd / \foM^{\rm log}_\htz, D)$, and the obstruction theory obtained by pullback under $\nu$ is simply the natural obstruction theory for this Hom scheme.

The key observation for proving Equation \ref{second} is that $\shM$ can also be thought of as an open subset of $\Hom(\foC^{\rm log}_\htz / \foM^{\rm log}_\htz, D)$.  Inside this space there is an analogous open $\Hom^0$ where we demand that the labelling obtained from the morphism to $D$ agrees with $p_*$ of the labelling coming from the universal property.

\begin{lemma}  The natural morphism 
$$\Psi:\Hom^0(\nu^{-1}\foC_\htd / \foM_\htz^{\rm log} , D) \to \Hom^0(\foC^{\rm log}_\htz / \foM^{\rm log}_\htz , D) $$ induced by the morphism $f: \foC^{\rm log}_\htz \to  \nu^{-1}\foC_\htd $ is an isomorphism.

\end{lemma}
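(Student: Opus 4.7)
The plan is to construct a two-sided inverse to $\Psi$. An $S$-point of the target $\Hom^0(\foC^{\log}_\htz/\foM^{\log}_\htz, D)$ consists of an $S$-family $\shC \to S$ of $\htz$-labeled log curves together with a morphism $g:\shC \to D$ satisfying the $\Hom^0$ compatibility between $g_*$ on component homology classes and $p_*$ on the labels. Writing $\shC' := \nu^{-1}\foC_\htd|_S$ and $f_S: \shC \to \shC'$ for the $S$-family version of $f$, the plan is to factor $g$ uniquely as $g = \bar g \circ f_S$; the assignment $g \mapsto \bar g$ will then be the desired inverse.

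First I would verify the pointwise claim that $g$ is constant on each connected fiber of $f_S$. By construction $f_S$ is an isomorphism away from a union of trees of rational components, and each non-trivial fiber is a connected tree $T$ of $\PP^1$'s whose $\htz$-labels lie in $\ker p_*$; these are precisely the components which the stabilization defining $\nu$ contracts. For such a component $C_i \subset T$ with label $\beta_i$, the $\Hom^0$ hypothesis forces $g_*[C_i] = p_*\beta_i = 0$, and a morphism from a projective rational curve to $D$ with vanishing homology class is constant. Since $T$ is connected, $g|_T$ is then constant.

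Second, I would promote this pointwise assertion to a scheme-theoretic factorization in families. The fibers of $f_S$ being either points or connected reduced trees of $\PP^1$'s implies $(f_S)_*\O_{\shC} = \O_{\shC'}$, and this identity persists under arbitrary further base change by cohomology-and-base-change. Combined with the fiberwise constancy of $g$ and the separatedness of $D$, the rigidity lemma for proper morphisms with $f_*\O = \O$ produces a unique morphism $\bar g:\shC' \to D$ with $g = \bar g \circ f_S$. The $\Hom^0$ condition for $\bar g$ is automatic: non-contracted components of $\shC$ correspond bijectively under $f_S$ to components of $\shC'$, preserving both the image in $D$ and the $p_*$-pushforward of the labels.

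The construction $g \mapsto \bar g$ is functorial in $S$ by the uniqueness of the factorization, and it is a two-sided inverse to $\Psi$ by construction. The main subtlety is performing the factorization universally in families rather than merely over geometric points, which is exactly what the base-change-stable identity $(f_S)_*\O_\shC = \O_{\shC'}$ guarantees. The log structure on $\shC$ plays no role in the factorization because $D$ carries the trivial log structure, so only the underlying morphism of schemes needs to be factored.
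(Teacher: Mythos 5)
Your proposal is correct and takes essentially the same approach as the paper: pointwise constancy of $g$ on the contracted rational trees forced by the $\Hom^0$ condition, followed by the family-level unique factorization through the contraction $c$ using $c_*\shO_C = \shO_{C'}$ and a rigidity argument (the paper delegates this last step to Lemma 2.2 of \cite{BM96}, which is the same ingredient you invoke). Your write-up merely makes explicit the "obvious" pointwise step the paper leaves to the reader.
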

\begin{proof}
The map $f$ just contracts some destabilized $\PP^1$'s.  The superscripts 0 imply that the rational curves contracted by $f$ are necessarily contracted by every morphism parametrized by the $\Hom^0$ schemes we are considering. 
Given an $S$-valued point of $\foM_\htz^{\rm log}$ corresponding to a marked log curve with underlying nodal curve $C \to S$, denote by $C' \to S$ the family obtained by applying $\nu$ and $c:C \to C'$ the associated contraction mapping (which corresponds to $f$).  A morphism $g: C' \to D$ corresponding to a point of $\Hom^0(\nu^{-1}\foC_\htd / \foM_\htz^{\rm log} , D)$ is taken by $\Psi$ to $\Psi(g) = g\circ c : C \to D$.  The statement that this morphism $\Psi$ gives an isomorphism amounts to the statement that any morphism $\tilde g : C \to D$ which is constant on those components of $C$ which are contracted by $c$ factors uniquely through $c$.  This is obvious when $S$ is the spectrum of an algebraically closed field.  To see that this holds over an arbitrary base, one needs to use the standard fact about contraction maps between families of curves that $c_*\shO_C = \shO_{C'}$.  The result then follows from Lemma 2.2 of \cite{BM96}.
\end{proof}

In addition to being isomorphic stacks, the natural obstruction theories on $\shM$ induced by these two descriptions agree, because the contracted genus zero components make no contribution to the cohomology of $f^*(\shT_D)$.
To prove Equation \ref{second}, we note that we now have that $\oM(Z,\beta)$ and $\shM$ are two stacks with relative perfect obstruction theories over the same base, $\foM^{\rm log}_\htz$.  They are both open subsets of stacks of morphisms from the same family of curves. In the case of the space of log stable maps to $Z$, we have that 
$\oM_{g,n}(Z,\beta)$ is an open subset of the logarithmic $\Hom$ stack $\Hom^{\rm log}(\foC^{\rm log}_{g,n}/\foM^{\rm log}_{g,n}, Z)$ parametrizing logarithmic maps from fibers 
of the universal family to $Z$.  The obstruction theory for this scheme is given by the cohomology of the pullback of the logarithmic tangent bundle of $Z$.  In particular, this obstruction complex depends only on the underlying morphism of schemes, not on the log structures.  The obstruction theory of $\shM$ comes from the cohomology of $f^*(\shT_D)$. In order to compare them, we use the short exact sequence
$$0\to \shT_{Z/D} \to \shT_Z\to \shT_D \to 0.$$

Given that we have  $H^1(C,f^*\shT_{Z/D})=0$ for all stable maps $f:C \to Z$, it follows from the associated long exact sequence in cohomology that $u$ is smooth with relative cotangent complex supported in one term, given by $H^0(f^*\shT_{Z/D})$ and we have a very special case of a compatibility datum, implying via \cite{Ma12a} Corollary 4.9 that $[\oM(Z)]^\vir = u^*[\shM]^\vir$ where $u^*$ denotes smooth pullback.

\qed


\section{Excluding nontrivial curves in $L_Y$}
For given $\Gamma\in\Omega(\shL_0)$, recall the maps $G$ and $P$ from \S\ref{sec-degen-formula}. Let $r_V$ denote the number of edges of $\Gamma_V$, i.e. edges in $\Gamma$ adjacent to $V$. In the following, we will use abusive notation by referring by $n_V$ not only to the set introduced in \S\ref{sec-degenformula} but also its size.
Note that $P\circ G:\oM_\Gamma\ra \oM_{0,n}(X,\beta)$ factors through the \'etale map $\Phi$, say $P\circ G=P'\circ\Phi$ for $P':\bigodot_V\oM_V\ra \oM_{0,n}(X,\beta)$.
Now $P'$ is induced by maps $P_V:\oM_V\ra\oM_{0,n_V+r_V}(X,\tilde p_*\beta_V)$ on the factors of $\bigodot_V\oM_V$ which is on each factor the composition of a stable map with $\tilde p$ which of course only has an effect for $\oM_V$ with $V$ associated to $Y$. For a $Y$-vertex $V$, recall the forgetful morphism $w:\oM_{\Gamma_V}(Y(\log D_0),\beta_V)\ra\oM_{0,n_V+r_V}(D,p_*\beta_V)$ that we studied in the previous section.

\begin{lemma} \label{lem-zero-pushforward}
If $\beta_V \in H^+_2(Y)=H_2^+(L_Y)$ is a curve class such that $p_*\beta_V \in H_2^+(D)$ is non-zero, then the class $[\oM_{\Gamma_V}(L_Y(\log L_D),\gamma)]^\vir$ pushes forward to zero under the natural map to $\oM_{0,n_V+r_V}(D,p_*\gamma)$.
\end{lemma}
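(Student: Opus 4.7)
The plan is to apply Theorem~\ref{thm-pullback} to the log smooth projective morphism $p:Y(\log D_0)\to D$. Its hypothesis is satisfied because the relative log tangent bundle $\shT_{Y(\log D_0)/D}=\shO_Y(D_\infty)$ is a nef line bundle (as discussed at the start of \S4): its pullback to any genus-zero source curve has non-negative degree on every component and therefore vanishing $H^1$. Theorem~\ref{thm-pullback} then produces a factorization
\[
\oM(Y(\log D_0),\pi_*\gamma)\xrightarrow{u}\shM\xrightarrow{v}\oM_{0,n_V+r_V}(D,p_*\gamma)
\]
together with the identity $[\oM(Y(\log D_0),\pi_*\gamma)]^\vir=u^*\nu^![\oM_{0,n_V+r_V}(D,p_*\gamma)]^\vir$. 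By Riemann--Roch on a tree of $\PP^1$'s and the vanishing of $H^1$, the relative dimension of the smooth map $u$ at a log stable map $f:C\to Y$ is
\[
h^0(C,f^*\shO_Y(D_\infty))=\pi_*\gamma\cdot D_\infty+1\ \geq\ 1,
\]
so $u$ is smooth of \emph{strictly positive} relative dimension.

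The projection $\pi:L_Y(\log L_D)\to Y(\log D_0)$ is strict and smooth (the log structure on $L_Y$ is pulled back from $Y$), so composition with $\pi$ induces a natural forgetful morphism
\[
\Pi:\oM_{\Gamma_V}(L_Y(\log L_D),\gamma)\to\oM(Y(\log D_0),\pi_*\gamma).
\]
The natural map of the lemma then factors as $w=v\circ u\circ\Pi$, and it suffices to show that $u_*\Pi_*[\oM_{\Gamma_V}(L_Y(\log L_D),\gamma)]^\vir=0$ in the Chow group of $\shM$.

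The key step is a local-to-log comparison for the map $\Pi$. Since $\shT_{L_Y(\log L_D)/Y(\log D_0)}=\pi^*\shO_Y(-D_\infty)$ is anti-nef, the $H^0$-part of its pullback vanishes on any component of a genus-zero source of positive $D_\infty$-degree, forcing every log stable map of class $\gamma$ to factor through the zero section. A standard argument then identifies the pushforward as a top-Chern-class cap:
\[
\Pi_*[\oM_{\Gamma_V}(L_Y(\log L_D),\gamma)]^\vir=e(\mathrm{Obs})\cap[\oM(Y(\log D_0),\pi_*\gamma)]^\vir,
\]
where $\mathrm{Obs}=R^1(\text{univ.\ proj.})_*(\text{univ.\ map})^*\shO_Y(-D_\infty)$ has rank $\max(\pi_*\gamma\cdot D_\infty-1,0)$. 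Substituting $[\oM(Y(\log D_0),\pi_*\gamma)]^\vir=u^*\nu^![\oM(D,p_*\gamma)]^\vir$ and applying the projection formula to the smooth proper morphism $u$ yields
\[
u_*\bigl(e(\mathrm{Obs})\cap u^*\nu^![\oM(D,p_*\gamma)]^\vir\bigr)=u_*e(\mathrm{Obs})\cdot\nu^![\oM(D,p_*\gamma)]^\vir.
\]
A direct comparison of dimensions shows that $u_*e(\mathrm{Obs})$ sits in a piece of the Chow group of $\shM$ of degree exactly two above $\dim\shM$, hence vanishes. (Equivalently, the rank of $\mathrm{Obs}$ is strictly less than the relative dimension of $u$.) Therefore $w_*[\oM_{\Gamma_V}(L_Y(\log L_D),\gamma)]^\vir=v_*\circ 0=0$.

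The main obstacle is making the local-to-log comparison precise in the logarithmic category: showing that every basic log stable map to $L_Y(\log L_D)$ of class $\gamma$ with $p_*\gamma\neq 0$ factors through the zero section, and that the relative perfect obstruction theory for $\Pi$ gives rise to the obstruction-bundle Euler class formula above. Once this is in place, the vanishing is a formal consequence of the strictly positive relative dimension of $u$ and the projection formula.
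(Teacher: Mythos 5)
Your proposal uses the same three ingredients as the paper's proof: the identification $\shT_{Y(\log D_0)/D}\cong\shO_Y(D_\infty)$ (a nef line bundle, so the hypothesis of Theorem~\ref{thm-pullback} holds for genus zero), the Euler-class relation $[\oM_{\Gamma_V}(L_Y(\log L_D),\beta_V)]^\vir = e(\shE)\cap[\oM_{\Gamma_V}(Y(\log D_0),\beta_V)]^\vir$ with $\shE=R^1\pi_*f^*\shO_Y(-D_\infty)$ of rank $\beta_V\cdot D_\infty-1$, and the two-unit dimension gap coming from $\operatorname{reldim}(u)-\rk\shE=2$. The difference is how you close the argument.

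The paper closes it by invoking the \emph{support} statement built into Theorem~\ref{thm-pullback}: since $[\oM_{\Gamma_V}(Y(\log D_0),\beta_V)]^\vir$ can be represented by a cycle supported on $w^{-1}(W)$ for any $W$ supporting $[\oM_{0,n_V+r_V}(D,p_*\beta_V)]^\vir$, the capped class $e(\shE)\cap(\cdots)$ is also supported there, and its proper pushforward therefore lies in $A_k(W)$ for $k>\dim W$, which vanishes. You instead try to factor through $\shM$, cap with $e(\mathrm{Obs})$, and then apply a projection formula of the shape $u_*\bigl(e(\mathrm{Obs})\cap u^*\alpha\bigr)=u_*e(\mathrm{Obs})\cdot\alpha$ and argue that the ``fiber integral'' $u_*e(\mathrm{Obs})$ vanishes for degree reasons. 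This is morally correct (the relevant operational class would live in $A^{-2}(\shM)=0$), but it is not an off-the-shelf step: it requires a Gysin pushforward on operational Chow groups for the smooth proper morphism $u$ of Artin/DM-type stacks, together with the stated compatibility with cap product. The standard projection formula ($u_*(u^*\alpha\cap\beta)=\alpha\cap u_*\beta$) has the roles of operational class and cycle reversed, so you cannot just cite it. The support-based closing is cleaner, avoids this issue entirely, and is exactly why the paper states the ``In particular, \ldots\ supported on $w^{-1}(W)$'' clause in Theorem~\ref{thm-pullback}. I would replace your projection-formula paragraph with that support argument; everything else in your plan matches the paper's proof. (One small point you and the paper both leave implicit: the claim that $\shE$ is a genuine vector bundle of rank $\beta_V\cdot D_\infty-1$ uses $\beta_V\cdot D_\infty>0$, which follows from $p_*\beta_V\neq 0$, the nefness of $\shA$, and the relation $D_\infty-D_0=p^*c_1(\shA)$ together with $\beta_V\cdot D_0>0$.)
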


\begin{proof} 
As we said in the previous section, $\shT_{Y(\log D_0)/D}\cong\shO_Y(D_\infty)$. 
Thus, for any genus zero stable map $f:C\ra Y(\log D_0)$ of type $\Gamma_V$, we have 
$\rk H^0(C,f^*\shO_Y(D_\infty))=\beta_V\cdot D_\infty+1$ and this number is the relative virtual and actual dimension of the map $u$ in \eqref{main-diagram-Y-D}.
This coincides with the relative dimension of $w$ because the map that forgets the log structure is of relative dimension zero.
Denoting by $\oM_{\Gamma_V}(L_Y(\log L_D),\beta_V)\stackrel{\pi}{\la}\shC\stackrel{f}{\ra} Y(\log D_0)$ the maps from the universal curve and setting $\shE=R^1\pi_*f^*\shO_Y(-D_\infty)$, we also have that
$$ [\oM_{\Gamma_V}(L_Y(\log L_D),\beta_V)]^\vir = e(\shE) \cap [\oM_{\Gamma_V}(Y(\log D_0),\beta_V)]^\vir.$$
Since the rank of $\shE$ is $\beta_V \cdot D_\infty - 1$, we find the virtual dimension of $\oM_{\Gamma_V}(L_Y(\log L_D),\beta_V)$ to be strictly greater than the virtual dimension of 
$\oM_{0,n_V+r_V}(D,p_*\beta_V)$. But Theorem~\ref{thm-pullback} implies that the pushforward of $[\oM_{\Gamma_V}(Y(\log D_0),\beta_V)]^\vir$ can be supported on any cycle that supports 
$[\oM_{0,n_V+r_V}(D,p_*\beta_V)]^\vir$.  Since $\oM_{0,n_V+r_V}(D,p_*\beta_V)$ is a Deligne-Mumford stack and we are working in Chow groups with rational coefficients, this implies the desired vanishing.
\end{proof}

\begin{remark}
It was pointed out to us by Feng Qu that a similar result to Lemma~\ref{lem-zero-pushforward}  was proved in \cite[Prop. 3.2.2.]{LLQW16} which would imply this result when $\shE = 0$. 
\end{remark}

\begin{proposition} \label{prop-nontrivial-in-Y-pushes-to-zero}
If $\Gamma\in \Omega(\shL_0)$ has a vertex $V$ with bipartition membership in $Y$ and $\beta_V$ is not a multiple of the class of a fiber of $p$, then 
$P_*G_*\Phi^*\Delta^!\prod_V[\oM_V]^\vir=0$.
\end{proposition}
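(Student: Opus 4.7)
The plan is to deduce the vanishing from Lemma~\ref{lem-zero-pushforward}: the hypothesis that $\beta_V$ is not a multiple of a fiber class of $p$ is equivalent to $p_*\beta_V\neq 0$, so Lemma~\ref{lem-zero-pushforward} gives $w_*[\oM_V]^\vir=0$ for the distinguished $Y$-vertex $V$. The task is to propagate this zero through the fiber product $\bigodot_V\oM_V$ using compatibility of proper pushforward with refined Gysin maps.

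First, since $\Phi$ is \'etale of degree one and $P\circ G=P'\circ\Phi$, we have $P_*G_*\Phi^*=P'_*$, so it suffices to show $P'_*\Delta^!\prod_V[\oM_V]^\vir=0$. At the distinguished vertex $V$, the component map $P_V$ factors as
$$\oM_V\stackrel{w}{\lra}\shK:=\oM_{0,n_V+r_V}(D,p_*\beta_V)\hookrightarrow \oM_{0,n_V+r_V}(X,\tilde p_*\beta_V),$$
because $\tilde p$ restricts to $p$ on $Y$ and lands in $D\subset X$; crucially, $w$ is compatible with the contact-point evaluation to $D$ that appears in the definition of $\bigodot$.

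Next I will form the Cartesian square obtained by substituting $\shK$ for $\oM_V$:
$$
\xymatrix@C=40pt{
\bigodot_V\oM_V \ar[r]\ar[d]_{\alpha} & \prod_V\oM_V\ar[d]^{w\times\id}\\
\shK\times_{D^{r_V}}\bigodot_{V'\neq V}\oM_{V'} \ar[r] & \shK\times\prod_{V'\neq V}\oM_{V'}
}
$$
in which both horizontal arrows are defined by the same refined Gysin pullback $\Delta^!$ along the diagonal $D^e\hookrightarrow(D^2)^e$, and $P'$ factors through the bottom-left corner. By compatibility of proper pushforward with refined Gysin maps,
$$\alpha_*\Delta^!\Bigl(\prod_V[\oM_V]^\vir\Bigr)=\Delta^!\Bigl(w_*[\oM_V]^\vir\,\times\prod_{V'\neq V}[\oM_{V'}]^\vir\Bigr)=0$$
by Lemma~\ref{lem-zero-pushforward}. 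Pushing this zero class further to $\oM_{0,n}(X,\beta)$ completes the proof.

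The main obstacle is verifying the Cartesianness of the square above with the stated Gysin structure, i.e.\ checking that $w$ intertwines the evaluation maps used in the two $\bigodot$-fiber products. This is not a deep calculation, but must be done carefully because $w$ forgets log structure and stabilizes the domain, and one has to confirm that contact evaluations to $D$ are preserved under both operations.
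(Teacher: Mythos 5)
Your proposal is correct and follows essentially the same strategy as the paper's proof: both isolate the $Y$-vertex $V$, observe that the evaluation maps from $\oM_V$ to $L_D$ factor through the pushforward $P_V$ (your $w$) to $\oM_{0,n_V+r_V}(D,p_*\beta_V)$, apply Lemma~\ref{lem-zero-pushforward} to conclude that $P_{V*}[\oM_V]^\vir=0$, and then propagate this vanishing through the fiber product $\bigodot_V\oM_V$ via compatibility of proper pushforward with the refined Gysin class $\Delta^!$. The paper packages the commutation step through a map $\tau$ obtained from the clutching construction rather than your auxiliary Cartesian square, but the content is the same; one small notational caveat is that the gluing evaluations land in $L_D=D\times\AA^1$ rather than in $D$, though this is harmless since the $\AA^1$-coordinate is constant on compact curves.
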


\begin{proof} 
Let $\Gamma$ and $V$ be as in the assertion. Set $r:=r_V$.  The argument here is simple, but we include 
a large diagram to remind the reader of the names of the maps.  The obvious, but important, point
is that the evaluation maps from $\oM_V = \oM_{\Gamma_V}(L_Y(\log L_D),\beta_V)$ to $L_D$ factors through
the map $P_V$ to $\oM_{0,n_V+r}(L_D, p_*\beta_V)$.  The stack $M'$ below is defined to make the bottom right square Cartesian.

\[
\xymatrix@C=30pt
{
\oM_\Gamma \ar[d]_G\ar[r]^(.3){\Phi}& \oM_V\times_{L_D^r} \bigodot_{V'\neq V}\oM_{V'} \ar[r] \ar[d] &  \oM_V\times \prod_{V'\neq V}\oM_{V'} \ar^{P_V\times id}[d]\\
\oM(X\cup Y)\ar[d]_P & M' \ar[ld]_\tau \ar[d]\ar[r] & \oM_{0,n_V+r}(L_D,p_*\beta_V) \times \prod_{V'\neq V}\oM_{V'} \ar^{\ev}[d]\\
\oM(X)&L_D^r\ar^{\Delta}[r]&( L_D)^{2r}.
}
\]
The only point not yet explained is the existence of the map $\tau$ but this is just the usual clutching construction for boundary strata.
Since $p_*\beta_V$ is nonzero, Lemma~\ref{lem-zero-pushforward} applies and we have
$$P_{V*}[\oM_{\Gamma_V}(L_Y(\log L_D),\beta_V)]^\vir=0.$$ 
Rewriting the class in the proposition
as 
$$\deg(\Phi) \cdot \tau_*\Delta^! \big(P_{V*}[\oM_{\Gamma_V}(L_Y(\log L_D),\beta_V)]^\vir \times 
\prod_{V'\neq V}[\oM_{V'}]^\vir\big)$$ 
the result follows immediately.
\end{proof}

\begin{proof}[Proof of Theorem~\ref{prop-vanishing-pushforward}]
Let us collect what is implied for the graph $\Gamma\in\Omega(\shL_0)$ if the pushforward to $\oM_{0,n}(X,\beta)$ of the corresponding virtual fundamental class is non-trivial:
by Lemma~\ref{lemma-no-mult-edges-X}, the X-vertices of the graph have no more than one adjacent edge and by Proposition~\ref{prop-nontrivial-in-Y-pushes-to-zero}, every curve component in $Y$ needs to be a multiple of a fiber. 
If a $P_*G_*[\oM_\Gamma]^\vir$ is non-trivial, it already follows that $\Gamma$ has a unique $Y$-vertex $V$ and we are only left with showing that this has only a single adjacent edge to it.
If $\beta_V$ is a multiple of a fiber class, borrowing notation from the proof of Proposition~\ref{prop-nontrivial-in-Y-pushes-to-zero} and setting
$\oM^\circ:=\prod_{V'\neq V}[\oM_{V'}]^\vir$,
the map from
$\oM_{\Gamma_V}(L_Y(\log L_D),\beta_V) \times_{(D\times\AA^1)^{r_V}} \oM^\circ$
to $\oM_{0,n}(X)$ factors through
$D \times_{(D \times \AA^1)^{r_V}}\oM^\circ$.  
Hence, it suffices to check that the pushforward of the virtual class from $\oM_{\Gamma_V}(L_Y(\log L_D),\beta_V)$ to $D$ vanishes which is precisely what the next lemma achieves.
\end{proof}

\begin{lemma}
If $\beta_V$ is a multiple of the class of a fiber of $p:Y\ra D$, then the pushforward of $[\oM_{\Gamma_V}(L_Y(\log L_D),\beta_V)]^\vir$ under the evaluation map to $(D \times \AA^1)^{r_V}$ is trivial if $n_V+r_V>1$.
\end{lemma}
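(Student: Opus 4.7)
The plan is to reduce the statement to a dimension count by first showing that every stable log map contributing to $\oM_V := \oM_{\Gamma_V}(L_Y(\log L_D),\beta_V)$ has very rigid image, so that the image of the evaluation map has dimension only $\dim D$, while the virtual dimension of $\oM_V$ strictly exceeds $\dim D$ under the hypothesis $n_V+r_V > 1$.

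Since $\beta_V = d[F]$ is a multiple of the fiber class of $p:Y\to D$, for any stable log map $f:C\to L_Y$ of class $\beta_V$ the composition $p\circ\pi\circ f$ (with $\pi:L_Y\to Y$) has class $p_*\beta_V = 0$ and is therefore constant to some point $q\in D$. Hence $f(C)\subset L_Y|_F\cong \Tot(\shO_F(-1))$ for $F:=p^{-1}(q)\cong\PP^1$, and $f$ is determined by a pair $(g,\sigma)$ consisting of a map $g:C\to F$ of total degree $d$ and a section $\sigma\in H^0(C,g^*\shO_F(-1))$.

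The crucial step---and, I expect, the heart of the argument---is to show that $\sigma\equiv 0$, so that $f$ factors through the zero section $F\hookrightarrow L_Y|_F$. On each irreducible component $C_j\subset C$ with $d_j:=\deg(g|_{C_j})>0$ the pullback $\shO_{\PP^1}(-d_j)$ has no nonzero global sections; on components contracted by $g$, $\sigma|_{C_j}$ is a constant; and since $C$ is a genus-zero tree containing at least one non-contracted component (as $d>0$), node-compatibility forces $\sigma=0$ along every path connecting a contracted component to a non-contracted one. Consequently each of the $r_V$ contact markings maps to the single point $(q,0)\in D\times\AA^1 = L_D$, so $\ev:\oM_V\to (D\times\AA^1)^{r_V}$ factors through the inclusion $D\hookrightarrow (D\times\{0\})^{r_V}\subset (D\times\AA^1)^{r_V}$, and the image of $\ev$ has dimension $\dim D$.

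The remaining computation is a virtual dimension count. Using the exact sequence $0\to\pi^*\shO_Y(-D_\infty)\to\shT_{L_Y}(-\log L_D)\to\pi^*\shT_Y(-\log D_0)\to 0$ together with $c_1(\shT_Y)\cdot F = 2$ and $D_0\cdot F = D_\infty\cdot F = 1$, we find $c_1(\shT_Y(-\log D_0))\cdot F = 2-1 = 1$ and $c_1(\shO_Y(-D_\infty))\cdot F = -1$, so $c_1(\shT_{L_Y}(-\log L_D))\cdot\beta_V = d(1+(-1)) = 0$. Hence $\vdim\oM_V = \dim L_Y - 3 + (n_V+r_V) = \dim D + n_V + r_V - 1$, which under the hypothesis is strictly larger than $\dim D$. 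Since $A_k(D)=0$ for $k>\dim D$, it follows that $\ev_*[\oM_V]^{\vir}$ vanishes. The pushforward is well-defined because, as a scheme, $\oM_V$ coincides with the proper moduli space $\oM_{\Gamma_V}(Y(\log D_0),\beta_V)$, all stable log maps having been shown to factor through $Y\subset L_Y$.
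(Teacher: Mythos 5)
Your proposal is correct and follows the same strategy as the paper's proof: show that $\ev$ factors through the copy of $D$ embedded diagonally into $(D\times\{0\})^{r_V}\subset(D\times\AA^1)^{r_V}$, then observe that $\vdim\oM_V=\dim D+n_V+r_V-1>\dim D$, so the pushforward dies for degree reasons. The paper's version is terse and simply \emph{asserts} the factoring through $\diag\times\{0\}$; you supply the justification (the curve lands in a single fiber $\Tot(\shO_{\PP^1}(-1))$ because $p_*\beta_V=0$, and the section $\sigma$ vanishes by the nefness-plus-tree argument), which is exactly the missing detail. Your dimension computation ($c_1(\shT_{L_Y}^{\log})\cdot F=0$, so $\vdim=\dim D+n_V+r_V-1$) matches the paper's claim, and your remark about properness of $\oM_V$ (hence well-definedness of the pushforward) via the identification with $\oM_{\Gamma_V}(Y(\log D_0),\beta_V)$ is a legitimate point that the paper leaves implicit.
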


\begin{proof} Set $r:=r_V$.
The evaluation map from $\oM_{\Gamma_V}(L_Y(\log L_D),\beta_V)$ to $D^r\times \AA^r$ factors through the embedding $D \to D^r\times \AA^r$ given by $\diag \times \{0\}$.
However, the virtual dimension of $\oM_{\Gamma_V}(L_Y(\log L_D),\beta_V)$ is $\dim(D) + n_V+r-1$, so for $n_V+r>1$ this vanishing is immediate.
\end{proof}



\end{document}